\newtheorem{theorem}{\bf Theorem}[section]
\newtheorem{corollary}[theorem]{Corollary}
\newtheorem{lemma}[theorem]{Lemma}
\newtheorem{proposition}[theorem]{Proposition}
\newtheorem{definition}[theorem]{Definition}
\newcommand{\qed}{\hfill $\square$ \bigskip}
\begin{document}

\title{Weakly toll convexity in graph products}

\author{Polona Repolusk $^{a}$}

\maketitle

\begin{center}
$^a$ Faculty of Natural Sciences and Mathematics, University of Maribor, Slovenia\\
{\tt polona.repolusk@um.si}\\
\end{center}

\begin{abstract}

The exploration of weakly toll convexity is the focus of this investigation. A weakly toll walk is any walk $W: u, w_1, \ldots , w_{k-1}, v$ between $u$ and $v$ such that $u$ is adjacent only to the vertex $w_1$, which can appear more than once in the walk, and $v$ is adjacent only to the vertex $w_{k-1}$, which can appear more than once in the walk. Through an examination of general graphs and an analysis of weakly toll intervals in both lexicographic and (generalized) corona product graphs, precise values of the weakly toll number for these product graphs are obtained. Notably, in both instances, the weakly toll number is constrained to either 2 or 3.  Additionally, the determination of the weakly toll number for the Cartesian and the strong product graphs is established through previously established findings in toll convexity theory. Lastly for all graph products examined within our scope, the weakly toll hull number is consistently determined to be 2.

\bigskip\noindent \textbf{Keywords:} weakly toll convexity, weakly toll number, graph product\\
	
\bigskip\noindent {\bf 2010 Mathematical Subject Classification}: 05C12, 52A01 05C76\\

\end{abstract}

\section{Introduction}\label{Int}

The theory of convexity has been extensively explored, with numerous papers delving into various aspects of the subject.
Various forms of interval convexities have been thoroughly investigated, with one particularly natural aspect stemming from abstract convexity theory known as the convex geometry property or Minkowski--Krein--Milman property. In this context, a vertex $s$ of a convex set $S$  is an extreme vertex of $S$, if $S - \{s \}$ is also convex.
 A graph $G$ is deemed a convex geometry concerning a given convexity if every convex set of $G$ is the convex hull of its extreme vertices. Specifically, under monophonic convexity,  exactly chordal graphs are convex geometries \cite{stg}, while under geodesic convexity \cite{Pela}, Ptolemaic graphs (i.e., distance-hereditary chordal graphs) hold this property, as detailed in~\cite{fj-86}.

A graph convexity, for which exactly the interval graphs are convex geometry, was introduced and explored in~\cite{abg}. The authors focused on interval graphs and built upon concepts from~\cite{al-14}, which characterized this graph family in terms of tolled walks. The concept of a tolled walk, a generalization of monophonic paths, led to the definition of toll convexity \cite{abg}, which was further investigated for instance in \cite{hull,tanja}.

Expanding on this research,~\cite{weakly} established a characterization of proper interval graphs, which are convex geometry with respect to weakly toll convexity. Thus, investigating weakly toll convexity holds significance. In \cite{weakly}, the authors also investigated weakly toll number and weakly toll hull number, determining these values for trees and providing bounds for interval graphs. Additionally,  \cite{zahtevnost} explored complexity results, demonstrating that the weakly toll number of any graph can be computed in polynomial time.

Graph products have garnered significant interest over time. Concerning various types of convexities, numerous papers have been published on the subject, such as \cite{bkh,bkt,chm,CC,TP1,TP2}. In \cite{weakly}, the authors expressed interest in exploring weakly toll convexity within the framework of graph products. In our study, we describe weakly toll intervals in the lexicographic and the corona product graphs, yielding weakly toll number values. In both cases, the resulting values are either 2 or 3, depending on the second factor of the product. Despite the polynomial algorithm presented in~\cite{zahtevnost}, our findings offer precise numerical results.
The outcomes for weakly toll number in the Cartesian and the strong product graphs directly derive from results presented for toll convexity in~\cite{TP1} and~\cite{TP2}.
Additionally, we find that for all graph products examined within our scope, the weakly toll hull number is consistently determined to be 2.

\section{Preliminaries}\label{Pre}

All graphs, considered in this paper, are finite, simple, non trivial (i.e. graphs with at least two vertices), connected and without multiple edges or loops.

Let $G=(V(G),E(G))$ be a graph and $u \in V(G)$. A set $N_G(u)=\lbrace v \in V(G) ~|~ uv \in E(G) \rbrace$ is an open neighbourhood of $u$ and $N_G\left[u\right]=N_G(u) \cup \lbrace u \rbrace$ is a closed neighbourhood of $u$. If the graph is clear from the context, we will omit the subscriptions.

A tolled walk \cite{abg} is any walk $T: u_0, u_1,\ldots. , u_{k-1}, u_k$ such that $u_0$ is adjacent only to the second vertex of
the walk, and $u_k$ is adjacent only to the second-to-last vertex of the walk. This implies that each of $u_1$ and $u_{k-1}$ occurs exactly once in the walk. 
Let $T_G(u, v) = \lbrace x \in V (G) ~|~ x \text{ lies in a tolled walk between } u \text{ and } v\rbrace$ be the toll interval between $u$ and $v$ in $G$. A subset $S$ of $V (G)$ is toll convex if $T_G(u, v) \subseteq S$ for all $u, v \in S.$   The toll closure of a subset is the union of toll intervals between all pairs of vertices from $S$, i.e., $T_G(S) = \cup_{u,v \in S}T_G(u, v).$
If $T_G(S) = V (G)$, $S$ is called a toll set of $G$. The order of a minimum toll set in $G$ is called the toll number of $G$ $(tn(G))$.

Let us recall the definition of the weakly toll walks from \cite{weakly}: 
Let $u,v \in V(G)$. A {\it weakly toll walk} $W$ between $u$ and $v$ in $G$ is a sequence of vertices of the form $$W: u=w_0,w_1,\ldots,w_{k-1},w_k=v,$$ where $k\ge 0$, which enjoys the following three conditions for $k>0$:
\begin{itemize}
\item $w_iw_{i+1}\in E(G)$ for all $i \in \lbrace 0,\ldots,k-1\rbrace$,
\item $uw_i\in E(G)$ implies $w_i=w_1$, where $i\in \lbrace 1,\ldots , k \rbrace$,
\item $w_i v\in E(G)$ implies $w_i=w_{k-1}$, where $i\in \lbrace 0,\ldots , k-1 \rbrace$.
\end{itemize}
In other words, a weakly toll walk is any walk $W: u, w_1, \ldots , w_{k-1}, v$ between $u$ and $v$ such that $u$ is adjacent only to the vertex $w_1$, which can appear more than once in the walk, and $v$ is adjacent only to the vertex $w_{k-1}$, which can appear more than once in the walk. If $uv \in E(G)$, then $W: u, v$ is a weakly toll walk, and if $k = 0$ then $W: u$ is a wealky toll walk consisting of a single vertex.

We define $WT_G(u, v) = \lbrace x \in V (G) ~|~ x \text{ lies in a weakly toll walk between } u \text{ and } v\rbrace$ to be the weakly toll interval between $u$ and $v$ in $G$. Finally, a subset $S$ of $V (G)$ is weakly toll convex if $WT_G(u, v) \subseteq S$ for all $u, v \in S$.
Note that any weakly toll convex set is also a toll convex set. Also, any toll convex set is a monophonically convex set, and a monophonically convex set is a geodesically convex set.
On the other hand, consider the graph $K_{1,3}$ with vertices $a, b, c, d$, where $b$ is the vertex with degree three, and let $S = \lbrace a, b, c \rbrace$. It is clear that $S$ is toll convex but not weakly toll convex, since $W: a, b, d, b, c$ is a weakly toll walk between $a$ and $c$ that contains $d \notin S$ \cite{weakly}.

The definition of weakly toll interval for two vertices $u$ and $v$ can be generalized to an arbitrary subset $S$ of $V (G)$ as follows:
$$WT_G(S) = \cup_{u,v \in S}WT_G(u, v).$$
If $WT_G(S) = V (G)$, $S$ is called a weakly toll set of $G$. The order of a minimum weakly toll set in $G$ is called the weakly toll number of $G$, and is denoted by $wtn(G)$. In \cite{zahtevnost}, this concept is defined as weakly toll interval number.
For any non trivial connected graph $G$, it is clear that $2 \leq wtn(G) \leq n.$

The weakly toll convex hull of a set $S \subseteq V (G)$ is defined as the intersection of all weakly toll convex sets that contain $S$, and we will denote this set by $\left[ S \right]]_{WT}$. A set $S$ is a weakly toll hull set of $G$ if  $\left[ S \right]]_{WT}=V(G)$. The weakly toll hull number of $G$, denoted by $wth(G)$, is the minimum among all the cardinalities of weakly toll hull sets. Given a set $S \subseteq V (G)$, define $WT^k(S)$ as follows: $WT^0(S) = S$ and $WT^k(S) = WT (WT^{ k-1}(S)),$
for $k \geq 1$. Note that  $\left[ S \right]]_{WT}=\cup_{k \in {\mathbb N}}WT^k(S)$. From the definitions we immediately infer that every weakly toll set is a weakly toll hull set, and hence $wth(G) \leq wtn(G)$.

In this paper we will only consider non complete graphs. Note that $wtn(K_n)=n$. It is clear from the definitions that $wtn(G) \leq tn(G)$. Also, $wtn(T)=2$ for any tree $T$ as weakly toll interval between any two leaves is a whole vertex set. Similarly, assume that a graph $G$ has two vertices of degree 1. Then also $wtn(G)=2$. 
Note that there are graphs with $wtn(G)>2$. For instance, take two copies of a complete graph $K_n$. Now, take one vertex of one copy of $K_n$ and join it with one vertex of another copy of $K_n$ by a path of length 2. Then $wtn(G)=2n-2.$

We follow the definitions of graph products by the book \cite{ImKl}.
Recall that for all of the standard graph products, the vertex set of the product of graphs $G$ and $H$ is equal to $V(G)\times V(H)$.
In the lexicographic product $G\left[H\right]$, vertices $(g_{1},h_{1})$ and $(g_{2},h_{2})$ are adjacent if either $g_{1}g_{2}\in E(G)$ or ($g_{1}=g_{2}$ and $h_{1}h_{2}\in E(H)$). 
In the Cartesian product $G\Box H$ of graphs $G$ and $H$, two vertices $(g_{1},h_{1})$ and $(g_{2},h_{2})$ are adjacent when ($g_1g_2\in E(G)$ and $h_1=h_2$) or ($g_1=g_2$ and $h_1h_2\in E(H)$). 
In the strong product $G\boxtimes H$ of graphs $G$ and $H$, two vertices $(g_{1},h_{1})$ and $(g_{2},h_{2})$ are adjacent  if either ($g_1g_2\in E(G)$ and $h_1=h_2$) or ($g_1=g_2$ and $h_1h_2\in E(H)$) or ($g_1g_2 \in E(G)$ and $h_1h_2 \in E(H)$).

Let $G$ and $H$ be graphs and $*$ be one of the two graph products under consideration. For a vertex $h\in V(H)$, we call the set $G^{h}=\{(g,h)\in V(G * H):g\in
V(G)\}$ a $G$-\emph{layer} of $G * H$. By abuse of notation we also consider $G^{h}$ as the corresponding induced subgraph. Clearly $G^{h}$ is isomorphic to $G$. For $g\in V(G)$, the $H$-\emph{layer} $^g\!H$ is defined as $^g\!H =\{(g,h)\in V(G * H)\,:\,h\in V(H)\}$. We also consider $^g\!H$ as an induced subgraph and note that it is isomorphic to $H$. A map $p_{G}:V(G * H)\rightarrow V(G)$, $p_{G}(g,h) = g$ is the \emph{projection} onto $G$ and $p_{H}:V(G * H)\rightarrow V(H)$, $p_{H}(g,h) = h$ the \emph{projection} onto $H$.

The corona product of two graphs $G$ and $H$, $G \circ H$, is defined as the graph obtained by taking one copy of $G$ and $|V(G)|$ copies of $H$ and joining the $i$-th vertex of $G$ to every vertex in the $i$-th copy of $H$. For more details on graph products see~\cite{ImKl}. 

In \cite{TP1}, authors derived that $tn(G\Box H)=2$ for any connnected non trivial graphs $G$ and $H$. This leads to:

\begin{corollary}
Let $G$ and $H$ be two connected non trivial graphs. Then $wtn(G \Box H)=2.$
\end{corollary}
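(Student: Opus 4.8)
The plan is to derive the corollary directly from the two facts already available in the excerpt: first, the result of \cite{TP1} that $tn(G\Box H)=2$ for all connected non-trivial graphs $G$ and $H$; and second, the universally valid chain of inequalities $2\le wtn(G\Box H)\le tn(G\Box H)$ noted in the Preliminaries (the lower bound holds for every non-trivial connected graph, and $wtn\le tn$ holds in general since every weakly toll walk is in particular a tolled walk, hence every toll set is a weakly toll set). Combining these immediately squeezes $wtn(G\Box H)$ between $2$ and $2$.

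Concretely, I would first recall that $G\Box H$ is connected and non-trivial whenever both $G$ and $H$ are, so the bound $wtn(G\Box H)\ge 2$ applies. Then I would invoke $tn(G\Box H)=2$ from \cite{TP1}, and the general inequality $wtn\le tn$, to get $wtn(G\Box H)\le 2$. Hence $wtn(G\Box H)=2$, which is exactly the claim. This is essentially a one-line argument, so there is no substantive obstacle; the only thing to be careful about is making sure the hypotheses of the cited theorem (connected, non-trivial) are exactly the hypotheses of the corollary, which they are.

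If one wanted a self-contained argument not relying on the precise value $tn(G\Box H)=2$, the alternative would be to exhibit an explicit two-element weakly toll set. A natural candidate is a pair of "diagonal corners": pick vertices $g_1,g_2\in V(G)$ and $h_1,h_2\in V(H)$ realizing suitable extremal positions (e.g. non-adjacent, or endpoints of a diametral-type pair in each factor) and take $S=\{(g_1,h_1),(g_2,h_2)\}$; one would then show that every vertex $(g,h)$ lies on a weakly toll walk between these two, using the fact that in a Cartesian product one can move "one coordinate at a time" and that adjacency of $(g_1,h_1)$ to $(g,h)$ forces strong constraints on $(g,h)$. The harder part of that approach would be handling the boundary conditions at both ends of the walk simultaneously — ensuring $(g_1,h_1)$ is adjacent only to the second vertex of the walk (up to repetition) and symmetrically at $(g_2,h_2)$ — but since the short proof via $tn$ is available and already cited, I would simply present the two-line squeeze argument and remark that it follows from \cite{TP1} together with $wtn(G)\le tn(G)$.
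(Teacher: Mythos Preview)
Your approach is correct and is exactly the paper's: the corollary is stated immediately after recalling that $tn(G\Box H)=2$ from \cite{TP1}, with no separate proof, so the intended argument is precisely the squeeze $2\le wtn(G\Box H)\le tn(G\Box H)=2$.

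One small slip to fix in your write-up: you justify $wtn\le tn$ by saying ``every weakly toll walk is in particular a tolled walk,'' but the inclusion goes the other way---every tolled walk is a weakly toll walk (the tolled-walk condition is the stricter one, since $w_1$ and $w_{k-1}$ must each occur exactly once). That correct inclusion gives $T_G(u,v)\subseteq WT_G(u,v)$, hence every toll set is a weakly toll set, and therefore $wtn(G)\le tn(G)$. Your conclusion and the rest of the argument are fine; only that one clause needs reversing.
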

They also found a formula for computing $tn(G \left[H \right])$ in terms of the so-called toll-dominating triples and some general bounds for this number. In \cite{TP2}, the toll number of the strong product was determined as $tn(G \boxtimes H) \leq 3$ with the characterization for both cases. Therefore:

\begin{corollary}
Let $G$ and $H$ be two connected non complete graphs. Then $wtn(G \boxtimes H)\leq 3.$
\end{corollary}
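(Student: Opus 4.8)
The plan is to obtain the bound as an immediate consequence of two facts already at hand: the inequality $wtn(F)\le tn(F)$, valid for every non-complete connected graph $F$ (noted in Section~\ref{Pre}; it follows because every tolled walk is in particular a weakly toll walk, so $T_F(u,v)\subseteq WT_F(u,v)$ and hence every toll set of $F$ is a weakly toll set of $F$), together with the bound $tn(G\boxtimes H)\le 3$ established in~\cite{TP2} for connected non-complete factors $G$ and $H$.

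The one preliminary point I would check is that the strong product $G\boxtimes H$ is itself a non-complete connected graph, so that the general inequality $wtn\le tn$ may be applied with $F=G\boxtimes H$ and so that $wtn(G\boxtimes H)$ lies in the ``non-complete'' regime to which the paper restricts. Connectedness of $G\boxtimes H$ is standard (it already holds for the weaker Cartesian product, which is a spanning subgraph). For non-completeness, use that $G$ is non-complete: pick non-adjacent $g_1,g_2\in V(G)$; then for any $h\in V(H)$ the vertices $(g_1,h)$ and $(g_2,h)$ are non-adjacent in $G\boxtimes H$ by the definition of the strong product, so $G\boxtimes H$ is not complete.

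Putting the pieces together gives the chain $wtn(G\boxtimes H)\le tn(G\boxtimes H)\le 3$, which is exactly the assertion. There is no real obstacle here beyond matching hypotheses: the only care needed is to confirm that the cited result of~\cite{TP2} is indeed stated for connected non-complete factors, and to note that we do not require its finer characterization of when $tn(G\boxtimes H)=3$, since only the upper bound is claimed. (Should one want the sharper statement $wtn(G\boxtimes H)\in\{2,3\}$, the matching lower bound is the trivial $wtn(F)\ge 2$ holding for every non-trivial connected graph.)
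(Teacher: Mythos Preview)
Your proposal is correct and matches the paper's own argument exactly: the corollary is stated immediately after recalling from~\cite{TP2} that $tn(G\boxtimes H)\le 3$, and it follows at once from the general inequality $wtn\le tn$ noted in Section~\ref{Pre}. Your extra verification that $G\boxtimes H$ is connected and non-complete is a harmless (and indeed appropriate) sanity check that the paper leaves implicit.
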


In the subsequent sections, we will initially describe the weakly toll intervals between non adjacent vetrices in the lexicographic product graphs, outlined in Section \ref{s:Lex}. Subsequently, we will ascertain the weakly toll number for this scenario. Section \ref{s:Corona} will be dedicated to describing weakly toll intervals between non adjacent vertices of the corona product of graphs. Once again, we will demonstrate the precise values of $wtn(G \circ H)$. Concluding this paper, we will offer remarks on the generalized corona product of graphs.

However, before delving further, we present two highly pertinent results that will greatly assist in our subsequent analysis:

\begin{lemma}\label{sosedi}
Let $G$ be a connected non complete graph and $u,v$ two non adjacent vertices of $G$. Let $W=WT_G(u,v)$ be a weakly toll interval between vertices $u$ and $v$ and $x \in V(G) \setminus (N\left[u\right] \cup N\left[v\right])$ such that $xy \in E(G)$ for some $y \in W\setminus \lbrace u,v \rbrace$. Then $x \in W$.
\end{lemma}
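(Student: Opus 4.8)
The plan is to take a weakly toll walk $W'$ in $G$ between $u$ and $v$ that passes through the vertex $y \in W \setminus \{u,v\}$ (such a walk exists by definition of $W = WT_G(u,v)$), and then splice in a short detour through $x$ at the moment the walk visits $y$, producing a new weakly toll walk between $u$ and $v$ that contains $x$. Concretely, if $W'$ has the form $u = w_0, w_1, \ldots, w_i = y, \ldots, w_{k-1}, v$, I would form $W'': u = w_0, w_1, \ldots, w_{i-1}, y, x, y, w_{i+1}, \ldots, w_{k-1}, v$, using the edges $yx$ and $xy$ guaranteed by the hypothesis $xy \in E(G)$. This is clearly a walk; the work is to verify that it is \emph{weakly toll}.

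The key verification has two symmetric parts: I must check that $u$ is adjacent in $W''$ only to its second vertex $w_1$, and that $v$ is adjacent in $W''$ only to its second-to-last vertex $w_{k-1}$. Since $W''$ is obtained from the weakly toll walk $W'$ only by inserting the new vertices $x$ (and a repeated copy of $y$), the only thing that could go wrong is $ux \in E(G)$ or $vx \in E(G)$ (a repeated occurrence of $y$ causes no problem, because $y$ either equals $w_1$/$w_{k-1}$ or is non-adjacent to $u$/$v$ already, these facts being inherited from $W'$ being weakly toll). But the hypothesis states precisely that $x \notin N[u] \cup N[v]$, so $ux \notin E(G)$ and $vx \notin E(G)$. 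Hence the insertion of $x$ does not create any new forbidden adjacency at either endpoint, and all the adjacency conditions already satisfied by $W'$ continue to hold in $W''$. Therefore $W''$ is a weakly toll walk between $u$ and $v$ containing $x$, which gives $x \in WT_G(u,v) = W$.

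I do not anticipate a genuine obstacle here; the statement is essentially a closure property of weakly toll intervals under "adding a pendant-type neighbour in the interior". The only point requiring a little care is the bookkeeping around the repeated vertex $y$: one should note that because $y \in W \setminus \{u,v\}$ lies on the weakly toll walk $W'$, the relation of $y$ to $u$ and to $v$ is already constrained (either $y \in \{w_1, w_{k-1}\}$, or $y$ is non-adjacent to both $u$ and $v$), so inserting a second copy of $y$ adjacent to the spliced-in $x$ introduces nothing new at the endpoints. One should also handle the trivial edge cases $i = 1$ or $i = k-1$ (i.e.\ $y = w_1$ or $y = w_{k-1}$) by the same splicing, observing that the resulting walk $u, w_1, x, w_1, w_2, \ldots$ still has $w_1$ as its (unique relevant) neighbour of $u$ up to the allowed repetition. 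With these remarks the proof is a direct check.
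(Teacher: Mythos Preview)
Your proposal is correct and follows essentially the same approach as the paper: take a weakly toll walk through $y$ and splice in the detour $y,x,y$, using $x\notin N[u]\cup N[v]$ to ensure the weakly toll condition is preserved. If anything, your write-up is more careful than the paper's, which omits the explicit verification that the repeated $y$ causes no problem and only briefly remarks on the boundary cases $y=w_1$ or $y=w_{k-1}$.
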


\begin{proof}
Assume $xy \in E(G)$ for some $y \in W\setminus \lbrace u,v \rbrace.$ Let $W_1$ be a weakly toll walk between $u$ and $v$ which contains $y$, say $W_1:u,w_0,w_1,\ldots,w_{i-1},y,w_{i+1},\ldots,w_n,v$. Then $W_1':u,w_0,w_1,\ldots,w_{i-1},y,x,y,w_{i+1},\ldots,w_n,v$ is a weakly toll walk containing $x$. Note that this also holds if $y=w_0$ or $y=w_n$ as  $xu \notin V(G)$ and $xv \notin V(G)$.\qed
\end{proof}

Combining this result with some propositions, that were used in the proof of the main theorem in \cite{zahtevnost}, we can derive the following:

\begin{lemma}\label{clanek2}
Let $G$ be a connected non complete graph and $u,v$ two non adjacent vertices of $G$. Let $W=WT_G(u,v)$ be a weakly toll interval between vertices $u$ and $v$. Assume that $W$ is such that  $|WT_G(u, v)| \geq |WT_G(w, z)|$ for any $w, z  \in V (G).$ 	Let $X_u=N\left[u\right]  \setminus W$, $X_v=N\left[v\right]  \setminus W$ and $X=V(G)\setminus W$. Then $X_u \cap X_v = \emptyset$ and $X=X_u \cup X_v$.
\end{lemma}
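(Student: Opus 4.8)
The plan is to prove the two assertions separately; only the second will use the maximality of $W$. Note first that $u,v\in W$ (a shortest $u$--$v$ path is a weakly toll walk), so no vertex of $W^c$ equals $u$ or $v$. For $X_u\cap X_v=\emptyset$: if $z\in X_u\cap X_v$, then $z\notin W$ gives $z\neq u,v$, and $z\in N[u]\cap N[v]$ forces $z\in N(u)\cap N(v)$; since $uv\notin E(G)$, the sequence $u,z,v$ is a weakly toll walk, so $z\in WT_G(u,v)=W$, a contradiction. This part needs neither maximality nor Lemma~\ref{sosedi}.

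For $X=X_u\cup X_v$ the inclusion $X_u\cup X_v\subseteq X$ is immediate, so it suffices to show $V(G)\setminus(N[u]\cup N[v])\subseteq W$. I would first record, using Lemma~\ref{sosedi}, that each connected component $D$ of the subgraph $G-(N[u]\cup N[v])$ satisfies $D\subseteq W$ or $D\cap W=\emptyset$: if some $t\in D$ lies in $W$, then $t\in W\setminus\{u,v\}$ (because $t\notin N[u]\cup N[v]$), each neighbour of $t$ inside $D$ lies outside $N[u]\cup N[v]$ and hence in $W$ by Lemma~\ref{sosedi}, and connectedness of $D$ propagates this to all of $D$. So the task reduces to excluding a component $D$ with $D\cap W=\emptyset$.

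Assume such a $D$ exists. Since $G$ is connected and $D\neq V(G)$, some vertex of $D$ has a neighbour in $N(u)\cup N(v)$; by the symmetry of the statement in $u$ and $v$ we may take $t'\in D$ adjacent to some $a\in N(u)$. The same use of Lemma~\ref{sosedi} as above gives $a\notin W$, and if $a\in N(v)$ then $u,a,v$ would be a weakly toll walk forcing $a\in W$; hence $a\in N(u)\setminus N[v]$. The key step is a rerouting: for \emph{every} weakly toll walk $W_1: u,w_1,\ldots,w_{k-1},v$ between $u$ and $v$, I claim $W_2: t',a,u,w_1,\ldots,w_{k-1},v$ is a weakly toll walk between $t'$ and $v$. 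It is a walk ($t'$ is adjacent to $a$, $a$ to $u$, and $W_1$ is a walk); the only neighbour of $t'$ appearing on $W_2$ is $a$ (it is not adjacent to $u$ since $t'\notin N[u]$, and if it were adjacent to some $w_i$ then $w_i\in W\setminus\{u,v\}$ and Lemma~\ref{sosedi} would place $t'\in W$, contradicting $D\cap W=\emptyset$); and the only neighbour of $v$ on $W_2$ is $w_{k-1}$ (among $u,w_1,\ldots,w_{k-1}$ this holds because $W_1$ is weakly toll and $uv\notin E(G)$, while $t'\in D$ and $a\in N(u)\setminus N[v]$ are non-adjacent to $v$), which is the penultimate vertex of $W_2$. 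Thus $V(W_1)\cup\{t',a\}\subseteq WT_G(t',v)$. Since $WT_G(u,v)$ is exactly the union of the vertex sets of all weakly toll $u$--$v$ walks, taking this union yields $W\cup\{t',a\}\subseteq WT_G(t',v)$; as $t'\notin W$ we get $|WT_G(t',v)|\ge|W|+1$, contradicting the maximality of $|WT_G(u,v)|$. Hence no such $D$ exists, $V(G)\setminus(N[u]\cup N[v])\subseteq W$, and $X=X_u\cup X_v$ follows.

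The component dichotomy and the $z\in X_u\cap X_v$ case are short; the real obstacle is verifying that $W_2$ is genuinely weakly toll, where one must preserve the endpoint-adjacency condition simultaneously at $t'$ (handled by Lemma~\ref{sosedi} together with $D\cap W=\emptyset$) and at $v$ (handled by the choice $a\notin N[v]$ and by $W_1$ already being weakly toll). The point that makes the maximality hypothesis bite is that carrying out this rerouting for \emph{every} weakly toll $u$--$v$ walk recovers all of $W$ inside $WT_G(t',v)$ while additionally picking up $t'$.
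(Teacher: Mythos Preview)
Your proof is correct, and it takes a genuinely different route from the paper's.

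The paper fixes a hypothetical vertex $x\in X\setminus(X_u\cup X_v)$ and aims to place $x$ on a weakly toll $u$--$v$ walk directly: it analyses shortest paths $P$ from $u$ to $x$ and $Q$ from $x$ to $v$, argues (via several subcases, with maximality invoked in one of them to derive $WT_G(x,u)\supset WT_G(u,v)$) that $P$ avoids $N[v]$ and $Q$ avoids $N[u]$, and then concatenates $P$ and $Q$ to obtain a weakly toll $u$--$v$ walk through $x$.

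Your argument instead organises $V(G)\setminus(N[u]\cup N[v])$ into components, uses Lemma~\ref{sosedi} to get the all-or-nothing dichotomy for each component, and for a bad component $D$ produces a single clean rerouting: prepend $t',a$ to \emph{every} weakly toll $u$--$v$ walk. This yields $W\cup\{t'\}\subseteq WT_G(t',v)$ in one stroke, and maximality is violated immediately and transparently, without any shortest-path case analysis. The delicate point---checking that $W_2$ really is weakly toll at both ends---is handled exactly as you describe: Lemma~\ref{sosedi} plus $D\cap W=\emptyset$ at the $t'$ end, and the choice $a\notin N[v]$ plus the weakly-toll property of $W_1$ at the $v$ end. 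Your approach is shorter and makes the role of the maximality hypothesis more explicit; the paper's approach has the minor advantage of showing directly that the specific vertex $x$ lies in $W$, but at the cost of more case distinctions.
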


\begin{proof}
First, assume that $X= \emptyset$. Then $W=V(G)$, $X_u=\emptyset$, $X_v=\emptyset$ and $\lbrace u,v \rbrace$ is a weakly toll set.
Now, let $X = V(G)\setminus W$ and $x \in X$. If $xy \in E(G)$ for some $y \in W\setminus \lbrace u,v \rbrace$. Then, by Lemma \ref{sosedi}, $x \in W$, a contradiction.
To complete the proof, let $x \in X \setminus( X_u \cup X_v)$,  such that for every $y \in W$, $xy \notin E(G)$. Every shortest path from $u$ to $x$ (resp. $v$ to $x$) contains:
\begin{enumerate}
	\item exactly one vetrex from $X_u$ (resp.$X_v$). If $uw \in E(G)$, where $w \in N(u)\cap W$, Lemma \ref{sosedi} would imply that $x \in W$.
	 \item no vertex of $N\left[v \right]$ (resp.$N\left[u \right]$). If it contains $v$, then this path only between $u$ and $v$ is a weakly toll walk containing a vertex of $X_u$, a contradiction. If it contains a neighbor of $v$, say $w$, then if $w \in W$, by Lemma \ref{sosedi}, $x \in W$, a contradiction. Otherwise, if $w \in X_v$, observe that $v \in WT_G(x,u)$: Let $P:u,w_1,\ldots,w_i,w,w_{i+1},\ldots,w_k,x$ be a shortest path between $u$ and $x$ going through $w$. As $uv \notin E(G)$ and $xv \notin E(G)$, $P':u,w_1,\ldots,w_i,w,v,w,w_{i+1},\ldots,w_k,x$ is a weakly toll walk between $u$ and $x$, containing $v$. Therefore, $WT_G(x,u) \supset WT_G(u,v)$, a contradiction.
	\end{enumerate}
To summarize, there are paths $P$ between $u$ and $x$ containing no vertex of $N\left[v \right]$ and $Q$ between $x$ and $v$ containing no vertex of $N\left[u \right]$. A concatenation of paths $P$ and $Q$ is a weakly toll walk from $u$ to $v$ containing $x$, a contradiction.
Therefore, $X=X_u \cup X_v$. Assume that there is $z \in X_u \cap X_v$. Then $W: u,z,v$ is a weakly toll walk and $z \in W$, a contradiction. \qed
\end{proof}

We say that a weakly toll interval between vertices $u$ and $v$ in a graph, for which $|WT_G(u, v)| \geq |WT_G(w, z)|$ for any $w, z  \in V (G)$, is maximum weakly toll interval of $G$.

\begin{corollary}\label{glavnaPOSL}
Let $G$ be a connected non complete graph. Then $wtn(G)>2$ if and only if it holds that for any two non-adjacent vertices $u,v \in V(G)$ such that  $|WT_G(u, v)|$ is maximum, $X_u \cup X_v\neq \emptyset$. 
\end{corollary}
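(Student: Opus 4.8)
The plan is to establish both directions of the equivalence directly from Lemma \ref{clanek2}, using the observation that by definition $wtn(G) = 2$ precisely when there exist two vertices $u, v$ with $WT_G(u,v) = V(G)$, and such a pair can always be taken among the non-adjacent ones (since $G$ is non-complete, if $WT_G(u,v) = V(G)$ for adjacent $u,v$, that forces a rather degenerate situation, but in fact it is cleaner to note that a maximum weakly toll interval equals $V(G)$ iff $wtn(G) = 2$, and such an interval is between non-adjacent vertices whenever $G$ is non-complete). First I would record the reformulation: $wtn(G) > 2$ iff no maximum weakly toll interval of $G$ equals $V(G)$, i.e., iff for every pair $u,v$ realizing a maximum weakly toll interval we have $WT_G(u,v) \neq V(G)$, equivalently $X = V(G) \setminus WT_G(u,v) \neq \emptyset$.

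For the forward direction, assume $wtn(G) > 2$. Take any non-adjacent pair $u,v$ with $|WT_G(u,v)|$ maximum (such a non-adjacent pair exists since $G$ is connected and non-complete, so the global maximum weakly toll interval is attained by non-adjacent vertices — if it were attained only by adjacent vertices one checks the interval cannot be all of $V(G)$ and a non-adjacent pair does at least as well). Then $W = WT_G(u,v) \neq V(G)$, so $X \neq \emptyset$, and by Lemma \ref{clanek2} we have $X = X_u \cup X_v$, hence $X_u \cup X_v \neq \emptyset$. For the converse, suppose that for every non-adjacent pair $u,v$ with $|WT_G(u,v)|$ maximum we have $X_u \cup X_v \neq \emptyset$. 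By Lemma \ref{clanek2}, $X = X_u \cup X_v \neq \emptyset$ for such a pair, so $WT_G(u,v) \subsetneq V(G)$; since this pair realizes the global maximum over all pairs (adjacent ones included, again because adjacent pairs do not do better when $G$ is non-complete), no pair of vertices has weakly toll interval equal to $V(G)$, and therefore $wtn(G) > 2$.

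The main obstacle — and the one point requiring a little care rather than being purely formal — is the matching of ``maximum among non-adjacent pairs'' with ``maximum among all pairs'', which is implicit in how Lemma \ref{clanek2} is stated (it takes a non-adjacent pair whose interval is maximum over all of $V(G)$). I would handle this by arguing that for non-complete connected $G$, an adjacent pair $u,v$ can never have $WT_G(u,v) = V(G)$: indeed any weakly toll walk from $u$ to $v$ with $uv \in E(G)$ must have $u$ adjacent only to its second vertex, which (combined with $v$ being a neighbour of $u$) severely restricts the walk; more simply, one invokes that $wtn(G) \le tn(G)$ and $tn$ considerations, or one simply notes that if some pair achieves $V(G)$ then $wtn(G) = 2$ and one may then re-choose a non-adjacent witnessing pair. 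Once this bookkeeping is in place, the corollary is an immediate translation of Lemma \ref{clanek2}, so the proof is short. I would close with the one-line remark that this is exactly the statement obtained by negating ``$wtn(G) = 2$'' against the characterization ``some maximum weakly toll interval is all of $V(G)$,'' which Lemma \ref{clanek2} converts into the condition $X_u \cup X_v = \emptyset$.
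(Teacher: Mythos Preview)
Your approach is essentially the same as the paper's: both directions are obtained by combining Lemma~\ref{clanek2} (which gives $X = X_u \cup X_v$ for a non-adjacent pair realising the maximum interval) with the observation that $wtn(G)=2$ iff some (equivalently, every) maximum weakly toll interval equals $V(G)$. You are actually a bit more careful than the paper in addressing why the global maximum is attained by a non-adjacent pair; for this the cleanest argument is simply that $WT_G(u,v)=\{u,v\}$ whenever $uv\in E(G)$, so in a non-complete connected graph any non-adjacent pair already does strictly better.
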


\begin{proof}
Let $G$ be a connected non complete graph with $wtn(G)=2$ and let $u,v \in V(G)$ be such that $WT_G(u,v)$ is maximum. Therefore, by Lemma \ref{clanek2}, $X=X_u \cup X_v$. On the other hand, as  $WT_G(u,v)=V(G)$, it follows that $X=V(G) \setminus WT_G(u,v)=\emptyset$, therefore $X_u \cup X_v=\emptyset.$
 
Now let $wtn(G)>2$. Then, for every $z,w \in V(G)$, $WT_G(z,w)\neq V(G)$. Let $u,v \in V(G)$ such that $WT_G(u,v)$ is maximum and let $x \in V(G) \setminus WT_G(u,v)$. Then, by Lemma \ref{clanek2}, $x\in X_u \cup X_v$.\qed
\end{proof}

Assume that we can describe weakly toll intervals between any two vertices of a graph. Then Corollary \ref{glavnaPOSL} implies that $wtn(G)=2$ if and only if there is a maximum weakly toll set of $G$ with $X_u \cup X_v=\emptyset$.

\section{The lexicographic product of graphs}\label{s:Lex}

Let us first describe weakly toll intervals between non adjacent vertices in the lexicographic product of two connected graphs $G$ and $H$.

\begin{lemma}\label{lexLema1}
Let $G$ and $H$ be connected non complete graphs and consider a lexicographic product graph $G\left[H\right]$. 
Let $g \in V(G)$ and let $h_1,h_2 \in V(H)$ be two non adjacent vertices.
Then  $WT_{G\left[H\right]}((g,h_1),(g,h_2))=V(G\left[H\right]) \setminus X$, 
where $$X=\lbrace (g,x) \in V( G\left[H\right]) ~|~ x\notin WT_{H}(h_1,h_2) \text{ and } x \text{ is adjacent to exactly one of } h_1 \text{ and } h_2\rbrace.$$
\end{lemma}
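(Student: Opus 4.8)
The plan is to establish the two inclusions
$WT_{G[H]}((g,h_1),(g,h_2)) \subseteq V(G[H]) \setminus X$ and
$V(G[H]) \setminus X \subseteq WT_{G[H]}((g,h_1),(g,h_2))$
separately; throughout I write $u=(g,h_1)$ and $v=(g,h_2)$, which are non adjacent since $h_1h_2\notin E(H)$. Two adjacency facts in $G[H]$ will be used constantly: a vertex $(g',h')$ with $g'g\in E(G)$ is adjacent to \emph{every} vertex of the layer ${}^g\!H$, in particular to both $u$ and $v$; and a vertex $(g,h')$ of ${}^g\!H$ is adjacent to $u$ (respectively to $v$) if and only if $h'h_1\in E(H)$ (respectively $h'h_2\in E(H)$).

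For the first inclusion, let $(g,x)\in X$; by the symmetry of the statement in $h_1,h_2$ I may assume $xh_1\in E(H)$ and $xh_2\notin E(H)$, and note $x\notin\{h_1,h_2\}$. Suppose $(g,x)$ lies on a weakly toll walk $W\colon u=w_0,w_1,\ldots,w_k=v$. Since $(g,x)$ is adjacent to $u$, the second condition in the definition of a weakly toll walk forces every occurrence of $(g,x)$ in $W$ to be at position $1$, so $w_1=(g,x)$. I then claim that $w_i\in{}^g\!H$ for all $i$, which I prove by induction: $w_0,w_1\in{}^g\!H$, and if $w_i\in{}^g\!H$ but $w_{i+1}=(g'',h'')\notin{}^g\!H$, then $w_iw_{i+1}\in E(G[H])$ with distinct first coordinates gives $gg''\in E(G)$, so $w_{i+1}$ is adjacent to $u$, whence the weakly toll condition forces $w_{i+1}=w_1=(g,x)\in{}^g\!H$, a contradiction. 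Hence $p_H(W)$ is a walk $h_1,x,p_H(w_2),\ldots,p_H(w_{k-1}),h_2$ in $H$, and, using the adjacency description above, the weakly toll conditions for $W$ translate directly into the weakly toll conditions for $p_H(W)$. Therefore $x\in WT_H(h_1,h_2)$, contradicting $(g,x)\in X$.

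For the second inclusion, take $(g',x')\notin X$ and exhibit a weakly toll walk from $u$ to $v$ through it, by cases. If $g'g\in E(G)$, the walk $u,(g',x'),v$ works. If $g'\neq g$ and $g'g\notin E(G)$, pick a shortest $G$-path $g=a_0,a_1,\ldots,a_m=g'$ (so $m\ge 2$) and a fixed $c\in V(H)$, and use the out-and-back walk $u,(a_1,c),(a_2,c),\ldots,(a_{m-1},c),(g',x'),(a_{m-1},c),\ldots,(a_1,c),v$; since $a_j$ is at distance $j\ge 2$ from $g$ for $2\le j\le m$ and $g'$ is at distance $m\ge 2$, the only vertices on this walk adjacent to $u$ or to $v$ are the copies of $(a_1,c)$, which all coincide, so the walk is weakly toll. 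If $g'=g$, then $(g,x')\notin X$ forces $x'\in WT_H(h_1,h_2)$, or $x'$ adjacent to both $h_1,h_2$ — in which case $h_1,x',h_2$ is a weakly toll walk in $H$ and again $x'\in WT_H(h_1,h_2)$ — or $x'$ adjacent to neither; in the first case I lift a weakly toll walk of $H$ from $h_1$ to $h_2$ through $x'$ coordinatewise into ${}^g\!H$, and in the last case, choosing any neighbour $a$ of $g$ in $G$ (which exists as $G$ is connected and non trivial), the walk $u,(a,c),(g,x'),(a,c),v$ is weakly toll because $(g,x')$ is adjacent to neither $u$ nor $v$.

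I expect the first inclusion to be the crux. The delicate point is that the hypothesis that $x$ is adjacent to \emph{exactly one} of $h_1,h_2$ — rather than to both — is precisely what prevents $(g,x)$ from serving as the companion $w_1$ of a walk that leaves and re-enters the layer: being adjacent to $u$ but not to $v$, it can occupy only position $1$, and then the induction confines the whole walk to ${}^g\!H$. Stating that induction and the translation of the weakly toll conditions between $G[H]$ and $H$ cleanly (including the boundary indices $0,1,k-1,k$) is the part that needs the most care; the second inclusion is a routine collection of explicit walk constructions.
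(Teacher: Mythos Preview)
Your proof is correct and follows essentially the same approach as the paper: explicit walk constructions for the inclusion $V(G[H])\setminus X\subseteq WT_{G[H]}(u,v)$, and for the reverse inclusion the observation that a weakly toll walk through $(g,x)\in X$ must have $w_1=(g,x)$ and hence stay confined to the layer ${}^g\!H$, contradicting $x\notin WT_H(h_1,h_2)$. Your induction showing $w_i\in{}^g\!H$ for all $i$ and the subsequent projection to $H$ make this confinement argument more explicit than the paper's version, but the idea is the same.
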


\begin{proof}
Obvioulsly, if $h \in WT_H(h_1,h_2)$, then $(g,h) \in WT_{G\left[H\right]}((g,h_1),(g,h_2))$ for any $g \in V(G)$. Let $g' \in V(G)$ be a neighbor of $g$, i.e. $gg' \in E(G)$. Then $W:(g,h_1),(g',h),(g,h_2)$ is a weakly toll walk for any $h \in V(H)$.
Now, let $g'' \in V(G)$ such that $gg'' \notin E(G)$. Let also $P:g,g_1,\ldots,g_k,g''$ be a shortest path between $g$ and $g''$ in $G$. Then  $$W:(g,h_1),(g_1,h_1),\ldots,(g_k,h_1),(g'',h),(g_k,h_1),\ldots,(g_1,h_1),(g_1,h_2)$$ is a weakly toll walk between $(g,h_1)$ and $(g,h_2)$, which contains a vertex $(g'',h)$ for any $h \in V(H)$.

To conclude the proof, let $(g,x) \in V( G\left[H\right])$ be such that  $x\notin WT_{H}(h_1,h_2)$. Let us consider three cases:
\begin{enumerate}
	\item If $x h_1 \notin E(H)$ and $x h_2 \notin E(H)$. Then for some neighbor $g'$ of $g$, $$W:(g,h_1),(g',h_1),(g,x),(g',h_1),(g,h_2)$$ is a weakly toll walk. 
	\item If $x h_1 \in E(H)$ and $x h_2 \notin E(H)$, as $(g,x) \notin WT_{H^g}((g,h_1),(g,h_2))$, then if $W$ is a weakly toll walk between $(g,h_1)$ and $(g,h_2)$ containing a vertex $(g,x)$, $W$ must contain some vertex in $V(^{g'}\!H)$, where $gg' \in E(G)$.  As $(g,h_1)(g',h) \in E(G\left[H\right])$ for any $h \in V(H)$, this is a contradiction with $W$ being a weakly toll walk.
	\item  Similarly, if $x h_1 \notin E(H)$ and $x h_2 \in E(H)$, then as $(g,h_2)(g',h) \in E(G\left[H\right])$ for any $h \in V(H)$, there is no weakly toll walk between $(g,h_1)$ and $(g,h_2)$ which would contain $(g,x)$.
\end{enumerate} 
Note that if $x h_1 \in E(H)$ and $x h_2 \in E(H)$, then $x\in WT_{H}(h_1,h_2)$. \qed
\end{proof}

\begin{lemma}\label{lexLema2}
Let $G$ and $H$ be connected non complete graphs and consider a lexicographic product graph $G\left[H\right]$. 
Let $g_1,g_2 \in V(G)$ be non adjacent vertices and let $h_1,h_2 \in V(H)$ be non adjacent vertices. Then  $WT_{G\left[H\right]}((g_1,h_1),(g_2,h_2))=\lbrace (g,h) ~|~ g \in WT_G(g_1,g_2), h \in V(H)\rbrace \setminus X$, where $X=N_{^{g_1}\!H}((g_1,h_1)) \cup N_{^{g_2}\!H}((g_2,h_2)).$
\end{lemma}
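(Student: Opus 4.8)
The plan is to prove the two inclusions separately, using the structure of edges in the lexicographic product (where $(g_1,h_1)$ is adjacent to $(g_2,h_2)$ as soon as $g_1 g_2 \in E(G)$, regardless of $h_1,h_2$).

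For the inclusion $\supseteq$, I would take a vertex $(g,h)$ with $g \in WT_G(g_1,g_2)$, $g \notin \{g_1, g_2\}$ (the cases $g = g_1$ or $g=g_2$ are handled by the removal of $X$, see below), and fix a tolled-type walk witnessing $g \in WT_G(g_1,g_2)$. Since $g_1, g_2$ are non-adjacent in $G$, I can lift this to a weakly toll walk in $G[H]$ in the first coordinate, inserting a detour through $(g,h)$ exactly as in the proof of Lemma \ref{sosedi} — if $g$ lies on the $G$-walk $W_1$ at a position $g_i$ adjacent to $g$ (which it must, on the walk), splice in $(g,h),(g_i,h_1)$. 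The key check is that the endpoint conditions are preserved: the neighbours of $(g_1,h_1)$ in $G[H]$ that occur on the walk are precisely the vertices projecting to $N_G(g_1)$ together with the vertices $(g_1, h')$ with $h' h_1 \in E(H)$; I must ensure the second vertex of the walk is the unique ``allowed'' neighbour. This is why the sets $N_{^{g_1}\!H}((g_1,h_1))$ and $N_{^{g_2}\!H}((g_2,h_2))$ must be excised, and why the second vertex of the lifted walk should be chosen in $^{g_1}H$ only when it equals $(g_1, h_2)$ or a first-coordinate neighbour — I'd route the walk so the second vertex is $(g_a, h_1)$ where $g_a$ is the second vertex of the $G$-walk, and $g_a \in N_G(g_1)$, so every $(g_a, h')$-type vertex is automatically adjacent to $(g_1,h_1)$, which is fine as long as the walk does not revisit $^{g_1}H$ at a forbidden vertex.

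For the inclusion $\subseteq$, suppose $(g,h)$ lies on a weakly toll walk $W$ between $(g_1,h_1)$ and $(g_2,h_2)$. Projecting $W$ to $G$ via $p_G$ gives a walk from $g_1$ to $g_2$ (with possible repetitions/stalls), and I claim $p_G(W)$ is a weakly toll walk in $G$ after deleting stalls, forcing $g \in WT_G(g_1,g_2)$: any vertex of $p_G(W)$ adjacent to $g_1$ in $G$ would, lifted back, be adjacent to $(g_1,h_1)$ in $G[H]$, so by the weakly toll condition it must project to the second-vertex column; symmetrically at $g_2$. Then I must rule out $(g,h) \in X$: if $g = g_1$ and $h h_1 \in E(H)$, then $(g_1,h)$ is a neighbour of $(g_1,h_1)$ in $G[H]$, so the weakly toll condition forces $(g_1,h)$ to be the second vertex $w_1$ of $W$; but then $W$ starts $(g_1,h_1),(g_1,h),\ldots$ and to leave the layer $^{g_1}H$ it must pass to some $(g',\cdot)$ with $g' \in N_G(g_1)$, which is again adjacent to $(g_1,h_1)$ — contradiction with $w_1$ being the only neighbour of $(g_1,h_1)$ on $W$ unless the walk never leaves $^{g_1}H$, impossible since it must reach $g_2 \neq g_1$ non-adjacent. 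Hence no vertex of $N_{^{g_1}\!H}((g_1,h_1))$ can occur on $W$, and symmetrically for $^{g_2}H$.

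The main obstacle I anticipate is the bookkeeping in the $\subseteq$ direction when the walk $W$ wanders repeatedly between $H$-layers: I need to argue carefully that the projected walk, after contracting stalls (consecutive equal vertices), genuinely satisfies the weakly toll endpoint conditions in $G$ — in particular that $g_1$ and $g_2$ each occur only ``at the ends'' in the contracted projection, using that $g_1 g_2 \notin E(G)$ and that returning to $g_1$ mid-walk would create a forbidden neighbour of $(g_1,h_1)$. Once that is pinned down, combined with the neighbour-excision argument above and Lemma \ref{sosedi}-style splicing for the reverse inclusion, the proof closes.
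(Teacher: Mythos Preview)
Your $\subseteq$ argument is essentially the paper's: having forced the second vertex of the walk to lie outside $^{g_1}\!H$ (since otherwise leaving that layer later produces a second neighbour of $(g_1,h_1)$), you project to $G$ and verify the endpoint conditions. The paper tidies up slightly differently --- rather than contracting stalls, it observes that any mid-walk visit to $^{g_1}\!H$ at some $(g_1,x)$ must be sandwiched as $(g_1',h_1'),(g_1,x),(g_1',h_1')$ and replaces each such triple by $(g_1',h_1')$ before projecting --- but the effect is the same, and your exclusion of $X$ matches the paper's argument as well.

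There is, however, a genuine gap in your $\supseteq$ direction. You treat $g\in WT_G(g_1,g_2)\setminus\{g_1,g_2\}$ and then say ``the cases $g=g_1$ or $g=g_2$ are handled by the removal of $X$''. This is backwards: removing $X$ only tells you which vertices of $^{g_1}\!H\cup{}^{g_2}\!H$ you are \emph{not} claiming to capture; you still owe a proof that every $(g_1,h)$ with $h\notin N_H[h_1]$ (and symmetrically every $(g_2,h')$ with $h'\notin N_H[h_2]$) actually lies on some weakly toll walk between $(g_1,h_1)$ and $(g_2,h_2)$. Nothing in your lifting-and-splicing argument for $g\notin\{g_1,g_2\}$ covers these vertices. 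The paper handles them explicitly with a separate splice: take a $g_1$--$g_2$ path $g_1,g_1',\ldots,g_k',g_2$ in $G$ and use
\[
(g_1,h_1),\,(g_1',h_1),\,(g_1,h),\,(g_1',h_1),\,(g_2',h_1),\ldots,(g_k',h_1),\,(g_2,h_2),
\]
which is weakly toll precisely because $hh_1\notin E(H)$, so $(g_1,h)$ is not a second neighbour of the start vertex. Without this step your inclusion leaves the two end layers (minus $X$) uncovered.
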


\begin{proof}
Let $g_1,g_2 \in V(G)$ be non adjacent vertices and let $h_1,h_2 \in V(H)$ be non adjacent vertices. First, let $g \in WT_G(g_1,g_2)\setminus \lbrace g_1,g_2\rbrace$ and let $W:g_1,g_1',\ldots,g_k',g,g_{1}''\ldots,g_l'',g_2$ be a weakly toll walk in $G$. Then $$W_1:(g_1,h_1),(g_1',h_1), \ldots,(g_k',h_1),(g,h),(g_{1}'',h_2),\ldots,(g_l'',h_2),(g_2,h_2)$$ is a weakly toll walk containing $(g,h)$ for any $h \in V(H)$.

Let now $h \in V(H) \setminus N\left[h_1\right]$ and $P:g_1,g_1',\ldots g_k',g_2$ a path in $V(G)$. Then
$$W:(g_1,h_1),(g_1',h_1),(g_1,h),(g_1',h_1),(g_2',h_1),\ldots,(g_k',h_1),(g_2,h_2)$$ is a weakly toll walk in $G \left[ H \right]$ containing a vertex $(g_1,h)$. Similarly we get a weakly toll walk containing a vertex $(g_2,h')$, where $h' \in V(H) \setminus N\left[h_2\right]$.

Let $(g_1,h) \in V(G\left[H\right])$ and let $hh_1 \in E(H)$. Note that any weakly toll walk between $(g_1,h_1)$ and $(g_2,h_2)$ contains a vertex $(g',h')$, where $g_1g'\in E(G)$ and $h' \in V(H)$. As $(g_1,h_1)(g_1,h) \in E(G\left[H\right])$ and also $(g_1,h_1)(g',h') \in E(G\left[H\right])$ for every $h' \in V(H)$, a weakly toll walk between $(g_1,h_1)$ and $(g_2,h_2)$ cannot contain a vertex $(g_1,h)$. Symetrically it follows that no vertex of $N((g_2,h_2)) \cap ^{g_2}\!H$ is in $WT_{G\left[H\right]}((g_1,h_1),(g_2,h_2))$.

To conclude the proof, let $(g,h) \in V(G\left[H\right])$, where $g \notin WT_G(g_1,g_2)$ and $h \in V(H)$. 
Suppose that $(g,h) \in  WT_{G\left[H\right]}((g_1,h_1),(g_2,h_2))$ and let $$W_2:(g_1,h_1),(g_1',h_1'), \ldots,(g_k',h_k'),(g,h),(g_{1}'',h_1''),\ldots,(g_l'',h_l''),(g_2,h_2)$$ be a weakly toll walk containing $(g,h)$. Recall that $(g_1',h_1')$ is the only neighbor of $(g_1,h_1)$ and $(g_l'',h_l'')$ is the only neighbor of $(g_2,h_2)$ in $W_2$. Therefore $g_1\neq g_1'$ and $g_2 \neq g_l''$, moreover,  $(N(g_1)\times V(H)) \cap W_2=\lbrace (g_1',h_1') \rbrace$ and  $(N(g_2)\times V(H)) \cap W_2=\lbrace (g_l'',h_l'') \rbrace$. 
Assume that for some $x \in V(H)$, $(g_1,x) \in W_2$. Then $xh_1 \notin E(H)$ and for every such vertex $(g_1,x)$, there is a subwalk $S_1: (g_1',h_1'),(g_1,x),(g_1',h_1')$ in $W$. Similarly, assume that for some $y \in V(H)$, $(g_2,y) \in W$. Then $yh_2 \notin E(H)$ and for every such vertex $(g_2,y)$, there is a subwalk $S_2: (g_l'',h_l''),(g_2,y),(g_l'',h_l'')$ in $W_2$. Let $W_2'$ be a walk in $G\left[H \right]$, in which we replace each subwalk $S_1$ with $(g_1',h_1')$ and each subwalk $S_2$ with $(g_l'',h_l'')$. Then the projection of $W_2'$ is a weakly toll walk in $G$ between $g_1$ and $g_2$, containing a vertex $g$, which is a contradiction.\qed
\end{proof}

\begin{corollary}\label{lex2}
Let $G$ and $H$ be connected non complete graphs with $wtn(H)=2$. Then $wtn(G \left[H\right])=2.$
\end{corollary}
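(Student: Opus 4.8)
The plan is to produce an explicit weakly toll set of size two in $G[H]$ by lifting a witnessing pair for $wtn(H)=2$ into a single $H$-layer, and then to read off the interval from Lemma \ref{lexLema1}.

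First I would record a small observation about weakly toll intervals between adjacent vertices: if $ab\in E(K)$ for any graph $K$, then the only weakly toll walk between $a$ and $b$ is the edge $a,b$ itself. Indeed, a longer walk $W: a, w_1, \ldots, w_{k-1}, b$ with $k>1$ has $ab\in E(K)$ and $b=w_k$, so the second defining condition of a weakly toll walk forces $w_k=w_1$, i.e. $k=1$, a contradiction. Hence $WT_K(a,b)=\{a,b\}$ whenever $ab\in E(K)$. Since $H$ is non-complete and non-trivial, $|V(H)|\ge 3$, so $\{a,b\}\neq V(H)$ for adjacent $a,b$. Because $wtn(H)=2$, there is a pair $h_1,h_2\in V(H)$ with $WT_H(h_1,h_2)=V(H)$, and by the previous sentence such a pair must be non-adjacent in $H$.

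Next I would fix an arbitrary $g\in V(G)$ and apply Lemma \ref{lexLema1} to the non-adjacent pair $(g,h_1),(g,h_2)$ of $G[H]$: it yields $WT_{G[H]}((g,h_1),(g,h_2))=V(G[H])\setminus X$, where $X=\{(g,x)\in V(G[H])\ :\ x\notin WT_H(h_1,h_2)\text{ and }x\text{ is adjacent to exactly one of }h_1,h_2\}$. Since $WT_H(h_1,h_2)=V(H)$, there is no vertex $x\notin WT_H(h_1,h_2)$, so $X=\emptyset$ and therefore $WT_{G[H]}((g,h_1),(g,h_2))=V(G[H])$. Thus $\{(g,h_1),(g,h_2)\}$ is a weakly toll set of $G[H]$. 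As $G$ is non-complete, $G[H]$ is non-complete (and non-trivial), so $wtn(G[H])\ge 2$, and we conclude $wtn(G[H])=2$.

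I do not anticipate a substantive obstacle; the only step requiring care is the justification that an optimal pair of $H$ can be taken non-adjacent, which is precisely the adjacent-vertices observation above combined with $|V(H)|\ge 3$. (One could equivalently phrase the conclusion via Corollary \ref{glavnaPOSL} by checking that $X_u\cup X_v=\emptyset$ for this maximum interval, but the direct construction is shorter.)
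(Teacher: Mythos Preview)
Your approach is essentially the paper's: pick a witnessing pair $\{h_1,h_2\}$ for $wtn(H)=2$, lift it into a single $H$-layer, and read off from Lemma~\ref{lexLema1} that $X=\emptyset$ and hence the interval is all of $V(G[H])$. The paper's proof is a one-line version of exactly this and even omits the non-adjacency check you supply.

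One small slip in that check: from $w_k=w_1$ you cannot infer $k=1$; for instance $a,b,a,b$ (with $k=3$) is a valid weakly toll walk between adjacent $a,b$. The conclusion $WT_K(a,b)=\{a,b\}$ is nonetheless correct: once $w_1=b$ and $w_{k-1}=a$ are forced, each $w_j$ is adjacent to $w_{j-1}\in\{a,b\}$, and the two defining conditions then force $w_j\in\{a,b\}$ as well, so no third vertex can appear. With this fix your argument goes through unchanged.
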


\begin{proof}
Let $W=\lbrace h_1,h_2 \rbrace$ be a weakly toll set of $H$ and $g \in V(G)$. By Lemma \ref{lexLema1}, a set $\lbrace (g,h_1),(g,h_2) \rbrace$  is a weakly toll set of $G \left[H\right]$.
\qed
\end{proof}

\begin{proposition}\label{lex3}
Let $G$ and $H$ be connected non complete graphs. Then $wtn(G \left[H\right])\leq 3.$
\end{proposition}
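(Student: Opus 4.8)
The plan is to produce an explicit weakly toll set of size at most $3$ in every case, using the interval descriptions of Lemmas~\ref{lexLema1} and~\ref{lexLema2}. By Corollary~\ref{lex2} we may assume $wtn(H)\ge 3$, so $H$ is non complete and has no weakly toll set of size $2$; in particular $H$ has at least two non adjacent vertices. Fix non adjacent $h_1,h_2\in V(H)$, and fix any vertex $g\in V(G)$. I would first try the natural candidate coming from a single $H$-layer, namely a set of the form $\{(g,h_1),(g,h_2),(g,h_3)\}$ for a well-chosen $h_3$. Using Lemma~\ref{lexLema1}, the ``missing'' set for the pair $(g,h_1),(g,h_2)$ is exactly the copy, inside $^g\!H$, of the set $A_{h_1,h_2}=\{x\in V(H): x\notin WT_H(h_1,h_2),\ x \text{ is adjacent to exactly one of } h_1,h_2\}$, while every vertex outside $^g\!H$ is already covered. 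So it suffices to choose $h_3\in V(H)$ so that $A_{h_1,h_2}\subseteq WT_H(h_1,h_3)\cup WT_H(h_1,h_2)\cup WT_H(h_2,h_3)$, together with the extra vertices contributed by the pairs involving $h_3$; note that the pairs $(g,h_i),(g,h_3)$ again only ever miss vertices inside $^g\!H$ (by Lemma~\ref{lexLema1} if $h_i h_3\notin E(H)$, and trivially if $h_ih_3\in E(H)$), so the whole question reduces to covering $V(H)$ inside the single layer $^g\!H\cong H$. Equivalently, I need: for every graph $H$ there exist three vertices $h_1,h_2,h_3$ with $WT_H(h_1,h_2)\cup WT_H(h_1,h_3)\cup WT_H(h_2,h_3)=V(H)$, i.e. $wtn(H)\le 3$ — which is false in general, since the excerpt itself exhibits graphs with $wtn>3$ (two copies of $K_n$ joined by a path of length $2$ give $wtn=2n-2$).

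Consequently the single-layer approach cannot work, and the real content is that a second factor $G$ with non adjacent vertices buys us room: vertices of $X=N_{^{g_1}\!H}((g_1,h_1))\cup N_{^{g_2}\!H}((g_2,h_2))$ excluded from $WT_{G[H]}((g_1,h_1),(g_2,h_2))$ by Lemma~\ref{lexLema2} sit inside two \emph{different} $H$-layers, and can be recaptured by a third base point placed in a third $H$-layer. So the main construction I would use is: pick non adjacent $g_1,g_2\in V(G)$ (possible since $G$ is non complete), pick a weakly toll set $\{g_1,g_2\}$-witness or rather any $g_1,g_2$ with $WT_G(g_1,g_2)$ large, and then pick three vertices $h_1,h_2,h_3\in V(H)$, forming $S=\{(g_1,h_1),(g_2,h_2),(g_3,h_3)\}$ — or, if $WT_G(g_1,g_2)$ already equals $V(G)$, the simpler $S=\{(g_1,h_1),(g_2,h_2)\}$-type set augmented by one more point in a suitable layer. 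Using Lemma~\ref{lexLema2} three times: $WT_{G[H]}((g_1,h_1),(g_2,h_2))$ covers $\{(g,h):g\in WT_G(g_1,g_2),h\in V(H)\}$ minus $N_{^{g_1}\!H}((g_1,h_1))\cup N_{^{g_2}\!H}((g_2,h_2))$; the pair $(g_1,h_1),(g_3,h_3)$ (if $g_1\ne g_3$) covers in particular all of $^{g_2}\!H$ provided $g_2\in WT_G(g_1,g_3)$; and symmetrically the pair $(g_2,h_2),(g_3,h_3)$ covers all of $^{g_1}\!H$ provided $g_1\in WT_G(g_2,g_3)$. The only layers needing care are $^{g_1}\!H,^{g_2}\!H,^{g_3}\!H$ themselves, and within those, only the neighbourhoods of the chosen vertices; these can be cleaned up by choosing $g_3$ on a shortest $g_1$–$g_2$ path (so $g_1,g_2\in WT_G(g_1,g_3)\cap WT_G(g_3,g_2)$ fails in general, so instead one argues with weakly toll walks going out of a layer and back, exactly as in the proofs of Lemmas~\ref{lexLema1}–\ref{lexLema2}), and by noting that $(g_i,h)$ with $h h_i\in E(H)$ is adjacent to $(g_i,h_i)$ but, crucially, a third base point lets a weakly toll walk reach it \emph{through a different layer}.

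Concretely I would: (1) reduce to $wtn(H)\ge 3$ via Corollary~\ref{lex2}; (2) fix non adjacent $g_1,g_2$ in $G$ and a diametral-ish choice so $WT_G(g_1,g_2)$ is as large as possible, and fix non adjacent $h_1,h_2$ in $H$; (3) define $S=\{(g_1,h_1),(g_2,h_1),(g_3,h_2)\}$ for a suitable third vertex $g_3$ (reusing $h_1$ in the first two coordinates so the excluded sets in step (2)'s interval become $N_{^{g_1}\!H}((g_1,h_1))\cup N_{^{g_2}\!H}((g_2,h_1))$), and write down, for each vertex $(g,h)\notin S$, an explicit weakly toll walk between two members of $S$ through it, splitting into the cases $g\notin\{g_1,g_2,g_3\}$ (easy, route along a shortest path in $G$ in a fixed $H$-layer, with a single detour to $(g,h)$ as in Lemma~\ref{lexLema1} case 1), $g\in\{g_1,g_2\}$ with $h$ not adjacent to the relevant $h_i$ (covered by the large interval of step (2) plus Lemma~\ref{lexLema2}'s ``$h\notin N[h_1]$'' paths), and $g\in\{g_1,g_2,g_3\}$ with $h$ adjacent to the relevant chosen vertex (the delicate case, handled by a weakly toll walk that leaves the layer $^g\!H$, visits $(g,h)$ from an adjacent layer, and returns); (4) conclude $wtn(G[H])\le 3$, and combine with the trivial lower bound $wtn\ge 2$ and Corollary~\ref{lex2} to pin it to $\{2,3\}$ as advertised. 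The main obstacle is step (3)'s delicate case: ensuring that the vertices $(g_i,h)$ with $hh_i\in E(H)$ — which are genuinely excluded from any pairwise interval that ``enters'' layer $^{g_i}\!H$ from outside — are nevertheless captured, which is exactly why three base points living in (at least two, and ideally three) distinct $H$-layers are needed and why two never suffice when $wtn(H)>2$. I expect that a clean choice is $g_3\ne g_1,g_2$ with $g_1,g_2\in WT_G(g_1,g_3)\cup WT_G(g_2,g_3)$ arranged so that each of $^{g_1}\!H$ and $^{g_2}\!H$ lies in an interval not anchored inside it, after which the only residual vertices are in $^{g_3}\!H$ and are handled by the first pair's interval together with the non-adjacency-detour trick.
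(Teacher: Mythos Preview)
Your first paragraph correctly extracts from Lemma~\ref{lexLema1} that the pair $(g,h_1),(g,h_2)$ (with $h_1h_2\notin E(H)$) already covers all of $V(G[H])$ except a set $X\subseteq {}^g\!H$. At this point the paper does precisely what you dismiss without trying: it places the third vertex in a \emph{different} $G$-layer rather than in $^g\!H$. Concretely, the paper picks $g_1,g_2\in V(G)$ at distance~$2$ with common neighbour $g$, and shows that $\{(g_1,h_1),(g_1,h_2),(g_2,h_1)\}$ is a weakly toll set. A vertex $(g_1,x)\in X$ with, say, $xh_1\in E(H)$ and $xh_2\notin E(H)$ lies on the explicit walk
\[
(g_1,h_2),\,(g,h_2),\,(g_1,x),\,(g,h_2),\,(g_2,h_1),
\]
which is weakly toll because \emph{neither endpoint is $(g_1,h_1)$}, so the troublesome edge $(g_1,x)(g_1,h_1)$ is irrelevant; the symmetric case is handled by the walk $(g_1,h_1),(g,h_1),(g_1,x),(g,h_1),(g_2,h_1)$. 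You only considered a third vertex \emph{inside} $^g\!H$, which indeed would force $wtn(H)\le 3$ and fails; moving the third vertex to another layer sidesteps this entirely, and the proof is two lines.

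Your alternative route via Lemma~\ref{lexLema2} with base points in several layers is substantially harder and is not carried through. Two concrete problems: (i) your step~(3) set $S=\{(g_1,h_1),(g_2,h_1),(g_3,h_2)\}$ contains the pair $(g_1,h_1),(g_2,h_1)$ with \emph{equal} $H$-coordinates, so neither Lemma~\ref{lexLema1} nor Lemma~\ref{lexLema2} applies to it and you would have to compute that interval from scratch; (ii) the intervals furnished by Lemma~\ref{lexLema2} only cover vertices $(g,h)$ with $g\in WT_G(g_i,g_j)$, so you would also need $\bigcup_{i<j}WT_G(g_i,g_j)=V(G)$, which you do not establish for an arbitrary connected non complete $G$. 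As written, the proposal is an outline with an acknowledged unresolved obstacle (``I expect that a clean choice is\ldots''), not a proof; the missing idea is simply to keep the two same-layer vertices from your first paragraph and add one vertex in an adjacent or nearby layer.
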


\begin{proof}
Let $g_1,g_2 \in V(G)$ and $d_G(g_1,g_2)=2$ and denote with $g \in V(G)$ a vertex, for which $gg_1\in E(G)$ and $gg_2 \in E(G)$. We have such vertices as $G$ is not a complete graph. Let $h_1,h_2 \in V(H)$ be two non adjacent vertices. By Lemma \ref{lexLema1}, $WT_{G \left[H\right]}((g_1,h_1),(g_1,h_2))=V(G\left[H\right]) \setminus X$, 
where $$X=\lbrace (g_1,x) \in V( G\left[H\right]) ~|~ x\notin WT_{H}(h_1,h_2) \text{ and } x \text{ is adjacent to exactly one of } h_1 \text{ and } h_2\rbrace.$$
Let now $(g_1,x) \in V( G\left[H\right])$ be such a vertex, that $x\notin WT_{H}(h_1,h_2)$ and  $(g_1,x)(g_1,h_1) \in E(G\left[H\right])$ but  $(g_1,x)(g_1,h_2) \notin E(G\left[H\right])$. 
Then $W:(g_1,h_2),(g,h_2),(g_1,x),(g,h_2),(g_2,h_1)$ is a weakly toll walk between $(g_1,h_2)$ and $(g_2,h_1)$ containing $(g_1,x)$. If $(g_1,x)(g_1,h_1) \notin E(G\left[H\right])$ and $(g_1,x)(g_1,h_2) \in E(G\left[H\right])$, then $W:(g_1,h_1),(g,h_1),(g_1,x),(g,h_1),(g_2,h_1)$ is a weakly toll walk between $(g_1,h_1)$ and $(g_2,h_1)$ containing $(g_1,x)$. Therefore, a set $\lbrace (g_1,h_1),(g_1,h_2),(g_2,h_1)\rbrace$ is a weakly toll set of $G \left[H\right]$. \qed
\end{proof}

\begin{theorem}\label{glavniLex}
Let $G$ and $H$ be connected non complete graphs.
Then 
\begin{equation*}
wtn(G \left[H\right]) = \left\{
\begin{array}{cl}
2; & wtn(H)=2,\\
3; & \text{otherwise.}
\end{array} \right.
\end{equation*}	
\end{theorem}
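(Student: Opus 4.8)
The plan is to prove the theorem by combining the results already established in the excerpt. The upper bounds are essentially done: Corollary~\ref{lex2} gives $wtn(G\left[H\right])=2$ when $wtn(H)=2$, and Proposition~\ref{lex3} gives $wtn(G\left[H\right])\le 3$ in general. So the only remaining work is to show that when $wtn(H)>2$ (equivalently $wtn(H)=3$, since by Corollary~\ref{glavnaPOSL} and the known bounds $wtn$ of a non-complete connected graph... actually we only need $wtn(H)\ge 3$), we have $wtn(G\left[H\right])\ge 3$, i.e. no two vertices form a weakly toll set of $G\left[H\right]$.

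The strategy for the lower bound is to take an arbitrary pair of vertices $(g_1,h_1),(g_2,h_2)\in V(G\left[H\right])$ and show $WT_{G\left[H\right]}((g_1,h_1),(g_2,h_2))\ne V(G\left[H\right])$, by exhibiting an omitted vertex. I would split into cases according to whether the pair lies in a common $H$-layer or not. Case A: $g_1=g_2=:g$. If $h_1h_2\in E(H)$ the two vertices are adjacent and the interval is tiny, so assume $h_1,h_2$ non-adjacent; then Lemma~\ref{lexLema1} describes the interval, and the omitted set $X$ is nonempty precisely when some $x\in V(H)$ with $x\notin WT_H(h_1,h_2)$ is adjacent to exactly one of $h_1,h_2$ — I need to argue such an $x$ exists whenever $WT_H(h_1,h_2)\ne V(H)$, which follows from the fact that $H$ is connected (walk along a shortest path from $h_1$ to the first vertex outside $WT_H(h_1,h_2)$; its predecessor is in the interval and one checks the offending vertex is adjacent to exactly one endpoint — essentially the content of Lemma~\ref{sosedi}/\ref{clanek2} applied to $H$). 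Since $wtn(H)>2$, by Corollary~\ref{glavnaPOSL} a maximum interval of $H$ is already proper, and for a non-maximum interval it is proper too, so $X\ne\emptyset$ and a vertex of $G\left[H\right]$ is missed. (If $h_1h_2\in E(H)$ one still needs a separate trivial argument: the interval of adjacent vertices is contained in the union of the two layers through them intersected with... but more simply, any weakly toll set of two adjacent vertices cannot exist here since $G\left[H\right]$ is far from complete; I would instead note that if both chosen vertices are adjacent then the interval is at most $N[(g,h_1)]\cup N[(g,h_2)]$, which is not everything.) Case B: $g_1\ne g_2$. If $g_1g_2\in E(G)$, then again $(g_1,h_1),(g_2,h_2)$ are adjacent and the interval is small; so assume $g_1g_2\notin E(G)$ (so $d_G\ge 2$) and also $h_1\ne h_2$ or $h_1h_2\notin E(H)$ as needed. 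Then Lemma~\ref{lexLema2} (or Lemma~\ref{lexLema1} with an $H$-layer argument when $h_1=h_2$) describes the interval, and the omitted set contains $N_{^{g_1}\!H}((g_1,h_1))$; since $H$ is non-complete and connected this neighbourhood is a nonempty proper subset of the layer, so again a vertex is missed.

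The main obstacle I expect is the bookkeeping of the degenerate sub-cases — when the two chosen vertices happen to be adjacent in $G\left[H\right]$ (either because their $G$-coordinates are adjacent, or because $g_1=g_2$ and $h_1h_2\in E(H)$), the two lemmas describing intervals do not directly apply, and one must argue separately that the interval of an adjacent pair is contained in $N[(g_1,h_1)]\cup N[(g_2,h_2)]$ — which is not all of $V(G\left[H\right])$ because neither factor is complete. This is routine but needs care. The other point requiring a small argument, rather than a citation, is the claim in Case~A that whenever $WT_H(h_1,h_2)$ is a proper subset of $V(H)$ there is a vertex $x\notin WT_H(h_1,h_2)$ adjacent to exactly one of $h_1,h_2$; this is exactly why $wtn(H)>2$ gives a defect in the lexicographic interval, and it is the crux linking the hypothesis to the conclusion. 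Everything else is assembling Corollaries~\ref{lex2} and Proposition~\ref{lex3} with these lower-bound observations.

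\begin{proof}
If $wtn(H)=2$, then $wtn(G\left[H\right])=2$ by Corollary~\ref{lex2}. So assume $wtn(H)>2$. By Proposition~\ref{lex3} it suffices to show $wtn(G\left[H\right])\ge 3$, i.e. that no two vertices of $G\left[H\right]$ form a weakly toll set.

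Fix $(g_1,h_1),(g_2,h_2)\in V(G\left[H\right])$. If these two vertices are adjacent in $G\left[H\right]$, then every weakly toll walk between them is, by definition of a weakly toll walk for adjacent endpoints together with the neighbour conditions, confined to $N_{G\left[H\right]}[(g_1,h_1)]\cup N_{G\left[H\right]}[(g_2,h_2)]$; since neither $G$ nor $H$ is complete, this union is a proper subset of $V(G\left[H\right])$, so $\{(g_1,h_1),(g_2,h_2)\}$ is not a weakly toll set.

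Now assume $(g_1,h_1)$ and $(g_2,h_2)$ are non-adjacent.

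\emph{Case 1: $g_1=g_2=:g$.} Then $h_1h_2\notin E(H)$. Since $wtn(H)>2$, we have $WT_H(h_1,h_2)\ne V(H)$, so there exists $x\in V(H)\setminus WT_H(h_1,h_2)$. Choose such an $x$ whose distance in $H$ to $\{h_1,h_2\}$ is smallest. If $x$ were adjacent to neither $h_1$ nor $h_2$, take a shortest $H$-path from (say) $h_1$ to $x$; its vertices before $x$ are closer to $\{h_1,h_2\}$ than $x$, hence lie in $WT_H(h_1,h_2)$, and by the analogue of Lemma~\ref{sosedi} applied inside $H$ this forces $x\in WT_H(h_1,h_2)$, a contradiction. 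If $x$ were adjacent to both $h_1$ and $h_2$, then $h_1,x,h_2$ is a weakly toll walk in $H$ and $x\in WT_H(h_1,h_2)$, again a contradiction. So $x$ is adjacent to exactly one of $h_1,h_2$, and hence, by Lemma~\ref{lexLema1}, $(g,x)\notin WT_{G\left[H\right]}((g,h_1),(g,h_2))$. Thus $\{(g,h_1),(g,h_2)\}$ is not a weakly toll set.

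\emph{Case 2: $g_1\ne g_2$.} Since the two vertices are non-adjacent, $g_1g_2\notin E(G)$. As $H$ is non-complete and connected, $N_{^{g_1}\!H}((g_1,h_1))$ is a nonempty proper subset of $V(^{g_1}\!H)$, so pick $(g_1,h)\in N_{^{g_1}\!H}((g_1,h_1))$. If $h_1h_2\notin E(H)$, Lemma~\ref{lexLema2} gives $(g_1,h)\in X\subseteq V(G\left[H\right])\setminus WT_{G\left[H\right]}((g_1,h_1),(g_2,h_2))$. If $h_1=h_2$ or $h_1h_2\in E(H)$, the same omission follows: any weakly toll walk from $(g_1,h_1)$ to $(g_2,h_2)$ must leave $^{g_1}\!H$ through a neighbour of $(g_1,h_1)$ lying in some $^{g'}\!H$ with $g_1g'\in E(G)$, and since $(g_1,h_1)$ is adjacent to every vertex of every such $^{g'}\!H$, the walk cannot contain $(g_1,h)$. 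In either case $\{(g_1,h_1),(g_2,h_2)\}$ is not a weakly toll set.

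In all cases no two vertices form a weakly toll set of $G\left[H\right]$, so $wtn(G\left[H\right])\ge 3$, and with Proposition~\ref{lex3} we conclude $wtn(G\left[H\right])=3$.
\end{proof}
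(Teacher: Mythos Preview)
Your overall strategy is correct and is essentially a more explicit version of the paper's argument: the paper also reduces to Lemmas~\ref{lexLema1} and~\ref{lexLema2}, observes that the largest two-vertex intervals in $G[H]$ come from same-layer non-adjacent pairs, and then invokes Corollary~\ref{glavnaPOSL} on $H$ to conclude that such an interval equals $V(G[H])$ exactly when $wtn(H)=2$. You instead check every type of pair directly, which avoids having to compare interval sizes and avoids the ``maximum'' hypothesis in Corollary~\ref{glavnaPOSL}; your Case~1 argument (choosing a closest $x\notin WT_H(h_1,h_2)$ and using Lemma~\ref{sosedi}) is a clean self-contained substitute for that corollary.

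There is, however, one genuine slip in the adjacent case. You assert that $N_{G[H]}[(g_1,h_1)]\cup N_{G[H]}[(g_2,h_2)]$ is a proper subset of $V(G[H])$ ``since neither $G$ nor $H$ is complete''. This is false in general: take $G=H=P_3$ with middle vertices $b$ and $y$; then $(b,y)$ is universal in $G[H]$, so for any neighbour $(g_2,h_2)$ the union of the two closed neighbourhoods is all of $V(G[H])$. The conclusion you want is still true, but for a stronger reason: for adjacent $u,v$ one has $WT(u,v)=\{u,v\}$ outright (from the definition, $uv\in E$ forces $w_1=v$ and $w_{k-1}=u$, and then inductively every $w_i\in\{u,v\}$), and $|V(G[H])|\ge 9>2$. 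Replace your neighbourhood-union sentence with this observation and the proof goes through. A second, purely cosmetic point: in Case~2 your first sub-case should read ``$h_1\neq h_2$ and $h_1h_2\notin E(H)$'' so that Lemma~\ref{lexLema2} genuinely applies; you already handle $h_1=h_2$ in the second sub-case, so this is only a matter of phrasing.
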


\begin{proof}
By Proposition  \ref{lex3}, $wtn(G \left[H\right]) \leq 3$. Investigating Lemma \ref{lexLema1} and Lemma \ref{lexLema2} we observe that for given graphs $G$ and $H$, maximum weakly toll intervals arrise when the interval is between two non adjacent vertices of one copy of $H$. Such an interval is maximum iff a set
$$X=\lbrace x \in V(H) ~|~ x\notin WT_{H}(h_1,h_2) \text{ and } x \text{ is adjacent to exactly one of } h_1 \text{ and } h_2\rbrace$$ is minimum, i.e. an empty set.  By the notation, used in Lemma \ref{clanek2}, observe that $X=X_{h_1} \cup X_{h_2}$. Moreover, applying Corolarry \ref{glavnaPOSL} leads to $X_{h_1} \cup X_{h_2}=\emptyset$ if and only if $wtn(H)=2.$ \qed
\end{proof}

Even more:
\begin{theorem}\label{hullLex}
Let $G$ and $H$ be connected non complete graphs. Then $wth(G \left[H\right]) = 2.$
\end{theorem}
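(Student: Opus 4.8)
The plan is to exhibit a weakly toll hull set of size $2$ in $G[H]$ for any connected non-complete graphs $G$ and $H$. Since $wth(G[H])\ge 2$ always holds for non-trivial connected graphs, it suffices to find two vertices whose weakly toll hull is all of $V(G[H])$. When $wtn(H)=2$ the result is immediate from Theorem~\ref{glavniLex}, since a weakly toll set is a weakly toll hull set; so the real work is the case $wtn(H)\ge 3$. I would fix two non-adjacent vertices $h_1,h_2\in V(H)$ (these exist since $H$ is non-complete) and a vertex $g\in V(G)$, and claim that $S=\{(g,h_1),(g,h_2)\}$ is a weakly toll hull set of $G[H]$.

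The key steps, in order, are as follows. First, apply Lemma~\ref{lexLema1} to compute $WT_{G[H]}((g,h_1),(g,h_2)) = V(G[H])\setminus X$, where $X=\{(g,x): x\notin WT_H(h_1,h_2)$ and $x$ adjacent to exactly one of $h_1,h_2\}$; note $X\subseteq\, ^g\!H$. Second, observe that after one application of $WT$ we already have in $WT^1(S)$ the vertices $(g,h_1),(g,h_2)$ together with a huge set of vertices in other $H$-layers — in particular, for every neighbour $g'$ of $g$ in $G$, all of $^{g'}\!H$ is in $WT^1(S)$ (from the walk $(g,h_1),(g',h),(g,h_2)$), and more generally every $(g'',h)$ with $g''\ne g$ is in $WT^1(S)$. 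So the only vertices possibly missing from $WT^1(S)$ lie in $^g\!H$, namely the set $X$. Third, I would show that each such missing vertex $(g,x)\in X$ enters at the next stage: pick a neighbour $g'$ of $g$, and observe that $(g',h_1)$ and $(g',h_2)$ (or any two suitable vertices of $^{g'}\!H$, which already lie in $WT^1(S)$) have a weakly toll interval in $G[H]$ that contains $(g,x)$. Indeed, $x$ is adjacent to exactly one of $h_1,h_2$, say $xh_1\in E(H)$, $xh_2\notin E(H)$; then using Lemma~\ref{lexLema1} with the layer $^{g'}\!H$ in place of $^g\!H$, we get $WT_{G[H]}((g',h_1),(g',h_2)) = V(G[H])\setminus X'$ where $X'\subseteq\,^{g'}\!H$, and since $(g,x)\notin\,^{g'}\!H$, it is not excluded, hence $(g,x)\in WT_{G[H]}((g',h_1),(g',h_2))\subseteq WT^2(S)$. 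Therefore $WT^2(S)=V(G[H])$ and $[S]]_{WT}=V(G[H])$.

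The main obstacle I anticipate is step three: one must be careful that the vertices $(g',h_1),(g',h_2)$ used to "catch" the missing vertices of $^g\!H$ are genuinely already in $WT^1(S)$, and that they are non-adjacent in $G[H]$ (they are, since $h_1h_2\notin E(H)$ and $g'=g'$), so that Lemma~\ref{lexLema1} applies to them. A subtlety is that the excluded set $X'$ for the pair $(g',h_1),(g',h_2)$ sits entirely inside $^{g'}\!H$, so it never interferes with recovering vertices of $^g\!H$; this is exactly what makes one extra iteration enough. One should also dispose of trivial edge cases (e.g.\ $g$ having a unique neighbour, or $WT_H(h_1,h_2)$ being large) but these do not affect the argument since only the existence of some neighbour $g'$ of $g$ is used, guaranteed by connectivity and non-triviality of $G$. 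Assembling these observations gives $wth(G[H])\le 2$, and combined with the trivial lower bound, $wth(G[H])=2$.
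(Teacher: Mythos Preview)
Your proof is correct and follows essentially the same two-step hull argument as the paper: take $S=\{(g,h_1),(g,h_2)\}$, use Lemma~\ref{lexLema1} to see that $WT^1(S)$ misses only a subset $X$ of $^g\!H$, and then show $WT^2(S)=V(G[H])$ by using vertices of another $H$-layer already present in $WT^1(S)$. The only difference is cosmetic: the paper picks a single helper vertex $(g_2,h_1)$ with $d_G(g_1,g_2)=2$ and invokes Proposition~\ref{lex3} (which already showed the triple $\{(g_1,h_1),(g_1,h_2),(g_2,h_1)\}$ is a weakly toll set), whereas you pick the pair $(g',h_1),(g',h_2)$ for a neighbour $g'$ of $g$ and apply Lemma~\ref{lexLema1} a second time; both routes recover the missing vertices of $X$ in one more iteration.
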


\begin{proof}
Note that $wth(G\left[H\right])\leq wtn(G\left[H\right]) \leq 3.$ Recall the proof of Proposition \ref{lex3}. Let $S=\lbrace (g_1,h_1),(g_1,h_2) \rbrace$. Then  $\left[ S \right]]_{WT}=V(G\left[H\right])$ as the vertex $(g_2,h_1) \in WT((g_1,h_1),(g_1,h_2))$.\qed
\end{proof}

\section{The Corona product of graphs}\label{s:Corona}

Let $G$ and $H$ be connected non complete graphs and denote $|V(G)|=n,$ $V(G)= \lbrace g_1,\ldots,g_n \rbrace$, $|V(H)|=m$ and $V(H)= \lbrace h_1,\ldots,h_m \rbrace$. Let $H^i$ denote the $i$-th copy of $H$ for all $i \in \lbrace 1,\ldots,n\rbrace$ and $V(H^i)=\lbrace h_1^i,\ldots, h_m^i \rbrace$. We define the corona produch of graphs $G$ and $H$, $G\circ H$, in the following way: $V(G \circ H)=V(G) \cup \bigcup_{j=1}^mV(H^j)$ and
$$E(G \circ H)= E(G) \cup \bigcup_{i=1}^n\left(E(H^i) \cup  \bigcup_{j=1}^m g_ih_j^i\right).$$
For every $i \in \lbrace 1,\ldots,n\rbrace$,  $E(H^i)$ represents the copy of the set $E(H)$.

Let us describe weakly toll intervals between vertices of $G \circ H$. First we begin with the case, where both vertices are of the same copy of $H$.

\begin{lemma}\label{lema1}
Let $G$ and $H$ be connected non complete graphs and consider a corona product graph $G \circ H$. Let $i \in \lbrace 1,\ldots,n \rbrace$ and let $h_1^i,h_2^i \in V(H^i)$ be  arbitrary non adjacent vertices of the $i$-th copy of $H$ in $G\circ H$. Then  $WT_{G\circ H}(h_1^i,h_2^i)=V(G \circ H) \setminus X$,  where $$X=\lbrace x \in V(H^i) ~|~ x\notin WT_{H^i}(h_1^i,h_2^i) \text{ and } x \text{ is adjacent to exactly one of } h_1^i \text{ and } h_2^i\rbrace.$$
\end{lemma}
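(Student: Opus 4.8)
The plan is to mirror the structure of the proof of Lemma~\ref{lexLema1}, using the fact that in $G\circ H$ the vertex $g_i$ is adjacent to \emph{every} vertex of $H^i$, so $g_i$ plays the role that a $G$-layer of a single vertex does, but now we must also verify that all the vertices in $V(G)$ and in the other copies $H^j$, $j\neq i$, are reachable by weakly toll walks. First I would prove the inclusion $V(G\circ H)\setminus X\subseteq WT_{G\circ H}(h_1^i,h_2^i)$. The easy part: for $x\in WT_{H^i}(h_1^i,h_2^i)$ the weakly toll walk witnessing $x$ inside $H^i$ is still a weakly toll walk in $G\circ H$, since in $G\circ H$ the only neighbours of $h_1^i$ outside $H^i$ is $g_i$, and $g_i$ is adjacent to $h_2^i$ as well, so $g_i$ cannot occur in such a walk other than as an interior vertex. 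Next, $g_i\in WT_{G\circ H}(h_1^i,h_2^i)$ via the walk $h_1^i, g_i, h_2^i$ (here we use that $h_1^i h_2^i\notin E$, so this is genuinely a walk of length $2$ and $g_i$ is the unique neighbour of each endpoint appearing). To reach any other vertex $w$ of $G\circ H$: take a shortest path from $g_i$ to $p_G(w)$ (or to $w$ if $w\in V(G)$) in $G$, lift it to the $G$-part, append $w$ and an extra hop into $H^{j}$ if $w\in H^{j}$, then traverse back and finally step down to $h_2^i$; concretely a walk of the shape $h_1^i, g_i, g_{a_1},\dots, g_{a_k}, (w \text{ possibly via its }g\text{-vertex}), g_{a_k},\dots,g_{a_1}, g_i, h_2^i$. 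Since $h_1^i$ and $h_2^i$ are only adjacent to $g_i$ and to vertices of $H^i$, and none of $g_{a_1},\dots,g_{a_k}$ nor $w$ lie in $H^i$, the walk is weakly toll. This handles all of $V(G)\cup\bigcup_{j\neq i}V(H^j)$ together with the vertices of $H^i$ lying in $WT_{H^i}(h_1^i,h_2^i)$.

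The second and more delicate inclusion is $WT_{G\circ H}(h_1^i,h_2^i)\subseteq V(G\circ H)\setminus X$, i.e.\ no vertex $x\in X$ lies on a weakly toll walk between $h_1^i$ and $h_2^i$. Fix such an $x\in V(H^i)$ with $x\notin WT_{H^i}(h_1^i,h_2^i)$ and, say, $xh_1^i\in E(H^i)$ but $xh_2^i\notin E(H^i)$ (the other case is symmetric). Suppose for contradiction $W$ is a weakly toll walk from $h_1^i$ to $h_2^i$ through $x$. The key observation is that $x$ is adjacent to $h_1^i$, so by the definition of a weakly toll walk the second vertex of $W$ must be $x$ (every neighbour of $h_1^i$ on the walk equals $w_1$); but then $W$ starts $h_1^i, x, \dots$ and must eventually leave $H^i$ to get to — no, actually it need not leave $H^i$, so instead the right argument is: consider any occurrence on $W$ of $g_i$ or of a vertex of some other copy. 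If $W$ stays entirely inside $H^i$, then $W$ is a weakly toll walk in $H^i$ from $h_1^i$ to $h_2^i$ containing $x$, forcing $x\in WT_{H^i}(h_1^i,h_2^i)$, a contradiction. Otherwise $W$ contains a vertex $z\notin V(H^i)$; take the first such $z$. The vertex of $W$ just before $z$ lies in $H^i$ and is adjacent to $z$, hence must be $g_i$ (the only vertex of $H^i$ with a neighbour outside $H^i$). Since $g_i$ is adjacent to $h_1^i$, the weakly toll condition at $h_1^i$ forces $g_i = w_1$, the second vertex of $W$; but the second vertex of $W$ must also be $x$ because $x$ is adjacent to $h_1^i$ and $x\in W$ — and $x\neq g_i$, a contradiction. (One must be slightly careful when $x$ itself is the second vertex and the walk revisits $h_1^i$; but revisiting $h_1^i$ only re-imposes that its neighbours on $W$ all equal $w_1$, which is consistent only if $w_1 = x$, so the argument goes through.) Finally the remark that $xh_1^i\in E$ and $xh_2^i\in E$ implies $x\in WT_{H^i}(h_1^i,h_2^i)$ via the walk $h_1^i, x, h_2^i$ shows $X$ really is the set of excluded vertices.

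I expect the main obstacle to be the bookkeeping in the second inclusion: making the ``$W$ must start with $x$ versus $W$ must start with $g_i$'' contradiction fully rigorous when the walk is allowed to repeat vertices (in particular repeat $h_1^i$ and $h_2^i$ and repeat $g_i$). The cleanest way to package this is probably a short auxiliary claim: \emph{any weakly toll walk in $G\circ H$ between two vertices of $V(H^i)$ that are non-adjacent and whose second vertex is not $g_i$ stays inside $H^i$}, proved by taking the first exit vertex and noting the predecessor must be $g_i$, which then must coincide with $w_1$. Combined with: \emph{if $x\in W\setminus\{h_1^i,h_2^i\}$ and $x$ is adjacent to $h_1^i$ then $w_1=x$}, we get that either $W\subseteq H^i$ or $w_1 = g_i$; in the former case $x\in WT_{H^i}(h_1^i,h_2^i)$, and in the latter $x$ adjacent to $h_1^i$ forces $x=w_1=g_i$, impossible since $x\in V(H^i)$. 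Everything else — the explicit weakly toll walks constructed in the first inclusion — is routine and parallels Lemma~\ref{lexLema1} closely.
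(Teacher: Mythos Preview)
Your overall strategy matches the paper's, and your argument for the exclusion $X\cap WT_{G\circ H}(h_1^i,h_2^i)=\emptyset$ is correct (indeed your packaging via the auxiliary claim ``if $w_1\neq g_i$ then the walk stays in $H^i$'' is a bit cleaner than the paper's). There is, however, one genuine missing case in your first inclusion.

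You claim to have covered $V(G)\cup\bigcup_{j\neq i}V(H^j)$ together with the vertices of $H^i$ lying in $WT_{H^i}(h_1^i,h_2^i)$, and you separately note that a vertex adjacent to both endpoints is automatically in $WT_{H^i}(h_1^i,h_2^i)$. But this leaves open the case of a vertex $x\in V(H^i)$ with $x\notin WT_{H^i}(h_1^i,h_2^i)$ that is adjacent to \emph{neither} $h_1^i$ nor $h_2^i$. Such an $x$ is not in $X$ (membership in $X$ requires adjacency to exactly one endpoint), so the lemma asserts $x\in WT_{G\circ H}(h_1^i,h_2^i)$, and you must exhibit a witnessing walk. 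Such vertices do exist: take $H$ with $V(H)=\{h_1,h_2,a,b,c\}$ and edges $h_1a,\,ah_2,\,h_1b,\,bc$; then $c$ is adjacent to neither $h_1$ nor $h_2$, yet $c\notin WT_H(h_1,h_2)$ because any walk through $c$ must use $b$, and then both $a$ and $b$ would be neighbours of $h_1$ on the walk. The fix is the one-line walk the paper uses: $h_1^i,\,g_i,\,x,\,g_i,\,h_2^i$, which is weakly toll precisely because $x$ is adjacent to neither endpoint. Your general construction for ``any other vertex $w$'' does not cover this, since it is tailored to $w\notin V(H^i)$.

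A minor slip: in your first-exit argument you write ``the vertex of $W$ just before $z$ lies in $H^i$ and is adjacent to $z$, hence must be $g_i$''; you mean $z=g_i$, since $g_i$ is the only vertex outside $H^i$ adjacent to anything in $H^i$. The rest of that paragraph uses $z=g_i$ correctly.
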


\begin{proof}
W.l.o.g., let $i=1.$ Let $h_1^1,h_2^1 \in V(H^1)$ be  arbitrary non adjacent vertices of $H^1$. First, observe that if $x \in V(G) \cup \bigcup_{k=2}^nV(H^k)$, then $x \in WT_{G\circ H}(h_1^1,h_2^1)$.
 As $W: h_1^1,g_1,h_2^1$ is a weakly toll walk, $g_1 \in WT_{G\circ H}(h_1^1,h_2^1)$. Note that if $x \neq g_1,$ then $x$ is not adjacent to either $h_1^1$ nor $h_2^1$. Consider a vertex $g_j$ for any $j \in \lbrace 2,3,\ldots,n\rbrace$. As $G$ is connected, there is a path, say $P_j$ between $g_1$ and $g_j$ in $G$. Let also $Q$ be a walk in $H^j$ that contains all vertices of $H^j$ and $\tilde{P_j}$ a path in $G$ between $g_j$ and $g_1.$ Then 
$$W: h_1^1,P_j,Q,\tilde{P_j},h_2^1,$$
is a weakly toll walk containing $g_j$ and every vertex of $V(H^j)$ for every $j \in \lbrace 2,3,\ldots,n\rbrace$. Therefore, $V(G) \cup \bigcup_{k=2}^nV(H^k) \subseteq WT_{G\circ H}(h_1^1,h_2^1).$

Now, let $x \in V(H^1) \setminus \lbrace h_1^1,h_2^1\rbrace$. Let us consider three cases:
\begin{enumerate}
	\item If $xh_1^1 \in E(H^1)$ and $xh_2^1 \in E(H^1)$, then $W:h_1^1,x,h_2^1$ is a weakly toll walk and $x \in WT_{G \circ H}(h_1^1,h_2^1)$.
	\item If $xh_1^1 \notin E(H^1)$ and $xh_2^1 \notin E(H^1)$, then $W:h_1^1,g_1,x,g_1,h_2^1$ is a weakly toll walk and $x \in WT_{G \circ H}(h_1^1,h_2^1)$.
	\item  Let $xh_1^1 \in E(H^1)$ and $xh_2^1 \notin E(H^1)$.  If $x \in WT_{H^1}(h_1^1,h_2^1)$, then $x \in WT_{G \circ H}(h_1^1,h_2^1)$.  To conclude the proof, assume that $x \notin WT_{H^1}(h_1^1,h_2^1)$, but $x \in WT_{G \circ H}(h_1^1,h_2^1)$. Therefore, a weakly toll walk between $h_1^1$ and $,h_2^1$ should contain $g_1$, but as $g_1h_1^1 \in E(G \circ H)$  and $xh_1^1 \in E(G \circ H)$, this is a contradiction. Therefore, in this case, $x \notin WT_{G \circ H}(h_1^1,h_2^1)$.
Note that the case $xh_1^1 \notin E(H^1)$ and $xh_2^1 \in E(H^1)$ is very similar.
\end{enumerate}\qed
\end{proof}

Let us continue with the description of weakly toll intervals between vertices of $G \circ H$, which lie in different copies of $H$.

\begin{lemma}\label{lema2}
Let $G$ and $H$ be connected non complete graphs and consider a corona product graph $G \circ H$. Let $i,j \in \lbrace 1,\ldots,n\rbrace$ be distinct and let $k,l \in \lbrace 1,\ldots,m\rbrace$. Consider vertices $h_k^i \in V(H^i)$ and $h_l^j \in V(H^j)$. Then  $WT_{G\circ H}(h_k^i,h_l^j)=V(G \circ H) \setminus \left( N_{H^i}(h_k^i) \cup N_{H^j}(h_l^j)  \right).$
\end{lemma}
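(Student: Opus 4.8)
The plan is to show two inclusions, mirroring the structure of the proof of Lemma \ref{lema1}. First I would verify that every vertex outside $N_{H^i}(h_k^i) \cup N_{H^j}(h_l^j)$ lies on some weakly toll walk between $h_k^i$ and $h_l^j$, and then that no vertex of $N_{H^i}(h_k^i) \cup N_{H^j}(h_l^j)$ can appear on such a walk.

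For the first inclusion, note that in $G\circ H$ the only neighbours of $h_k^i$ are $g_i$ together with $N_{H^i}(h_k^i)$, and similarly for $h_l^j$. So any weakly toll walk from $h_k^i$ to $h_l^j$ must leave through $g_i$ (since $g_i g_k \notin$ issue — rather, because $g_i$ is the unique vertex adjacent to $h_k^i$ that can lead out of $H^i$, and a weakly toll walk to a vertex in another copy cannot stay inside $H^i$), and symmetrically must enter $h_l^j$ through $g_j$. Fix a path $P$ in $G$ from $g_i$ to $g_j$; then for a vertex $x$ of the form $g_r$ or $x\in V(H^r)$ for any $r$, I would route a weakly toll walk $h_k^i, g_i, \ldots, g_j, h_l^j$ that makes a detour: go from $g_i$ to $g_r$ along a path in $G$, traverse a closed walk $Q$ through all of $H^r$ (to capture every $h_s^r$), return to $g_i$, then proceed $g_i,\ldots,g_j,h_l^j$ — exactly as in Lemma \ref{lema1}. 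This shows $V(G)\cup\bigcup_r V(H^r) \subseteq WT$ except possibly for the vertices inside $H^i$ and $H^j$. For a vertex $x\in V(H^i)\setminus N_{H^i}(h_k^i)$ (with $x\ne h_k^i$), I would use the subwalk $h_k^i, g_i, x, g_i, \ldots, g_j, h_l^j$: since $x h_k^i\notin E$, and $x$ is adjacent to $h_k^i$'s only other out-neighbour $g_i$, and $x$ is in a different copy than $h_l^j$ so $x h_l^j \notin E$, this is a legitimate weakly toll walk. Symmetrically for $x\in V(H^j)\setminus N_{H^j}(h_l^j)$. Finally $h_k^i$ and $h_l^j$ themselves are trivially in the interval.

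For the second inclusion, suppose $x\in N_{H^i}(h_k^i)$ and $x$ lies on some weakly toll walk $W$ between $h_k^i$ and $h_l^j$. Since $h_l^j$ is in a different copy, $W$ must contain $g_i$ (the unique exit vertex of $H^i$). But $g_i$ is adjacent to $h_k^i$, and $x$ is also adjacent to $h_k^i$ with $x\ne g_i$; the weakly toll condition forces every neighbour of $h_k^i$ appearing in $W$ to equal the designated second vertex $w_1$, so $W$ cannot contain both $g_i$ and $x$ — contradiction. The case $x\in N_{H^j}(h_l^j)$ is symmetric. Hence no such $x$ is in the interval, and $WT_{G\circ H}(h_k^i,h_l^j)=V(G\circ H)\setminus(N_{H^i}(h_k^i)\cup N_{H^j}(h_l^j))$.

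The one point that needs care — the main (mild) obstacle — is rigorously justifying that any weakly toll walk from $h_k^i$ to $h_l^j$ must pass through $g_i$ and $g_j$: one must argue that the only way to leave copy $H^i$ is via $g_i$ (true because $V(H^i)$'s only neighbours outside $H^i$ are $g_i$), and that the walk genuinely has to leave $H^i$ because $h_l^j\notin V(H^i)$. Once this structural fact is pinned down, both the construction of the detour walks and the exclusion argument are routine, paralleling Lemma \ref{lema1} almost verbatim.
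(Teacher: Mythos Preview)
Your proposal is correct and follows essentially the same approach as the paper's proof: both establish the inclusion by routing detour walks $h_k^i, g_i, \ldots, g_r, Q^r, \ldots, g_j, h_l^j$ through each copy $H^r$ (with $r\neq i,j$), then handle $V(H^i)\setminus N_{H^i}(h_k^i)$ and $V(H^j)\setminus N_{H^j}(h_l^j)$ via the subwalk $h_k^i, g_i, x, g_i, \ldots$, and both exclude $N_{H^i}(h_k^i)\cup N_{H^j}(h_l^j)$ by observing that $g_i$ (resp.\ $g_j$) must lie on every such walk and is a second neighbour of $h_k^i$ (resp.\ $h_l^j$). The only cosmetic difference is that your detour returns to $g_i$ before heading to $g_j$, whereas the paper goes directly $g_r \to g_j$; both variants are valid weakly toll walks for the same reason.
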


\begin{proof}
W.l.o.g, let $i=1$ and $j=2$. Let $P$ be a path between $g_1$ and $g_2$ in $G$. Then $W: h_k^1,P,h_l^2$ is a weakly toll walk. Therefore $g_1,g_2 \in WT_{G\circ H}(h_k^1,h_l^2)$.

Let $g_o \in V(G) \setminus \lbrace g_1,g_2 \rbrace$. Let $P_{g_1,g_o}$ be a path between $g_1$ and $g_o$ in $G$ and let $P_{g_o,g_2}$ be a path between $g_o$ and $g_2$ in $G$. Note that concatenation of paths $P_{g_1,g_o}$ and $P_{g_o,g_2}$ is a walk and $g_1$ and $g_2$ can appear more than once in this walk.
Let also $Q$ be a walk in $H$, which contains every vertex of $V(H)$ and $Q^o$ its copy in $V(H^o)$.
Then $W:h_k^1,P_{g_1,g_o},Q^o,P_{g_o,g_2},h_l^2$ is a weakly toll walk containing $g_o$ and every vertex of a set $V(H^o)$. 
Therefore, $\bigcup_{i=1}^n\lbrace g_i\rbrace \cup\bigcup_{i=3}^n  V(H^i) \subseteq WT_{G\circ H}(h_k^1,h_l^2)$.

Let $x \in V(H^1)$, $x \notin N\left[h_k^1\right]$ and $P$ a path between vertices $g_1$ and $g_2$ in $G$. Then $W: h_k^1,g_1,x,P,h_l^2$ is a weakly toll walk in $G \circ H$ containing $x$. Similarly, Let $y \in V(H^2)$, $y \notin N\left[h_l^2\right]$. Then $W: h_k^1,P,y,g_2,h_l^2$ is a weakly toll walk in $G \circ H$ containing $y$.

To conclude the proof, let $x \in N_{H^1}(h_k^1)$. Note that $g_1$ lies on any weakly toll walk between $h_k^1$ and $h_l^2$. As $h_k^1g_1,h_k^1x \in E(G \circ H)$, $x \notin WT_{G\circ H}(h_k^i,h_l^j)$. Similarly, if  $y \in N_{H^2}(h_l^2)$, $y \notin WT_{G\circ H}(h_k^i,h_l^j)$ as  $h_l^2g_2,h_l^2y \in E(G \circ H)$.\qed
\end{proof}

Before we  consider weakly toll intervals in $G \circ H$ between vertices of $G$, let us present an observation about weakly toll walks in the corona product graphs:

\begin{proposition}\label{sprehodi}
Let $G$ and $H$ be connected non complete graphs and consider a corona product graph $G \circ H$. Let $g_1,g_2,g \in V(G)$, where $g_1g_2 \notin E(G)$. Then $g \in WT_{G}(g_1,g_2)$ if and only if $g \in WT_{G\circ H}(g_1,g_2)$.
\end{proposition}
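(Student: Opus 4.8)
The plan is to prove the two directions separately, exploiting the fact that in $G\circ H$ every vertex of a copy $H^i$ is a pendant-type vertex attached to $g_i$, so it can never shorten or create "new" connections between vertices of $G$. Throughout, fix non-adjacent $g_1,g_2\in V(G)$.

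For the forward direction ($g\in WT_G(g_1,g_2)\Rightarrow g\in WT_{G\circ H}(g_1,g_2)$), I would take a weakly toll walk $P: g_1=u_0,u_1,\dots,u_k=g_2$ in $G$ through $g$ and argue that it is still a weakly toll walk in $G\circ H$. Since $G$ is an induced subgraph of $G\circ H$, all edges of $P$ survive, so $P$ is a walk in $G\circ H$. It remains to check the two "adjacency only to the neighbour in the walk" conditions. In $G\circ H$ the neighbourhood of $g_1$ is $N_G(g_1)\cup V(H^{i_1})$ where $g_1=g_{i_1}$; the walk $P$ contains no vertex of $V(H^{i_1})$ (it lies entirely in $G$), and among vertices of $G$ the adjacency condition already holds because $P$ was weakly toll in $G$. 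The same argument applies at $g_2$. Hence $P$ is a weakly toll walk in $G\circ H$ containing $g$.

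For the reverse direction ($g\in WT_{G\circ H}(g_1,g_2)\Rightarrow g\in WT_G(g_1,g_2)$), I would start with a weakly toll walk $W$ in $G\circ H$ between $g_1$ and $g_2$ passing through $g\in V(G)$, and project it to $G$. The key structural remark is that whenever $W$ enters a copy $H^i$, it must do so through $g_i$ (the only vertex of $G$ adjacent to $H^i$), so every maximal stretch of $W$ inside $\{g_i\}\cup V(H^i)$ that contains a vertex of $H^i$ begins and ends at $g_i$; replacing each such stretch by the single vertex $g_i$ yields a walk $W'$ all of whose vertices lie in $V(G)$, still joining $g_1$ to $g_2$ and still containing $g$. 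One must check $W'$ is weakly toll in $G$: the crucial point is the first vertex $w_1$ of $W$. Since $g_1g_2\notin E(G)$ and $g_1$ is non-adjacent to every $H^i$ with $i\neq i_1$, a vertex of $W$ adjacent (in $G\circ H$) to $g_1$ is either in $V(H^{i_1})$ or a $G$-neighbour of $g_1$; the weakly toll condition forces all of these to equal $w_1$. If $w_1\in V(H^{i_1})$, then $w_1$ and $w_2$ are both in $N[g_1]\cap(\{g_1\}\cup V(H^{i_1}))$, and one checks the stretch near $g_1$ collapses so that the projected walk starts $g_1,g_{i_1'}$ with $g_{i_1'}$ a genuine $G$-neighbour of $g_1$ appearing uniquely among $G$-vertices adjacent to $g_1$ — here I would use that after collapsing, no $G$-vertex other than this one is adjacent to $g_1$ in $W'$, because any such vertex would already have violated the weakly toll condition for $W$. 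The symmetric analysis applies at $g_2$.

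I expect the main obstacle to be the reverse direction, specifically the bookkeeping at the two endpoints when the weakly toll walk $W$ begins or ends by going into the pendant copy $H^{i_1}$ (or $H^{i_2}$) attached to $g_1$ (resp. $g_2$): one must verify that after collapsing the $H^i$-excursions the resulting $G$-walk still satisfies "$g_1$ is adjacent only to its successor in the walk," and that this successor is a vertex of $G$ (so that the projected object is really a walk in $G$ and not just in $G\circ H$). Everything else — edges surviving, the middle of the walk, and the forward direction — is routine. A clean way to organize the endpoint analysis is to observe that, because $g_1g_2\notin E(G)$, the walk $W$ must in any case leave $N_{G\circ H}[g_1]$ and reach a $G$-neighbour of $g_1$ before it can progress toward $g_2$, and that $G$-neighbour is forced to be $w_1$ (if $w_1\in V(G)$) or the unique $G$-vertex following the initial $H^{i_1}$-excursion.
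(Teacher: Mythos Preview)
Your plan is correct and coincides with the paper's proof: the forward direction is immediate because $G$ sits in $G\circ H$ as an induced subgraph, and the reverse direction is handled by taking a weakly toll walk $W$ in $G\circ H$ and deleting all vertices not in $V(G)$, then observing that the result is a weakly toll walk in $G$ containing $g$.

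The obstacle you anticipate at the endpoints in fact dissolves, and your proposed treatment of the case $w_1\in V(H^{i_1})$ is both unnecessary and not quite right. Every vertex of $V(H^{i_1})$ is adjacent to $g_1$, so if $w_1\in V(H^{i_1})$ then the weakly toll condition forces every $w_j$ adjacent to $g_1$ to equal $w_1$; in particular no $G$-neighbour of $g_1$ and no other vertex of $V(H^{i_1})$ can appear in $W$. But the only edges leaving $\{g_1,w_1\}$ go to $G$-neighbours of $g_1$ or to other vertices of $V(H^{i_1})$, so $W$ would be trapped in $\{g_1,w_1\}$ and never reach $g_2$. Hence $w_1\in N_G(g_1)$ (and symmetrically $w_{k-1}\in N_G(g_2)$), so $W$ contains no vertex of $V(H^{i_1})\cup V(H^{i_2})$ at all, and after deleting the remaining $H$-excursions (each of which enters and exits through the same $g_i$) the projected walk is automatically weakly toll in $G$; there is no ``unique $G$-vertex following the initial $H^{i_1}$-excursion'' to locate.
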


\begin{proof}
If $g \in WT_{G}(g_1,g_2)$, then obviously $g \in WT_{G\circ H}(g_1,g_2)$. 
Assume now that $g \notin WT_{G}(g_1,g_2)$ and suppose that  $g \in WT_{G\circ H}(g_1,g_2)$. Denote with $W$ a weakly toll walk between $g_1$ and $g_2$ in $G \circ H$ that contains $g$. Then $W$ must contain some vertex from the set $V(G \circ H) \setminus V(G)$. Now, construct a new weakly toll walk $W'$, which we get by taking $W$ and leaving out any vertex that is not in $V(G)$. By the structure of the corona product graph we see, that $W'$ in fact is a walk and even a weakly toll walk between $g_1$ and $g_2$, which contains $g$ and all vertices of $W'$ are in $V(G)$, a contradiction.\qed
\end{proof}

\begin{lemma}\label{lema3}
Let $G$ and $H$ be connected non complete graphs and consider a corona product graph $G \circ H$. Let $i,j \in \lbrace 1,\ldots,n\rbrace$ be disctinct. Then  $WT_{G\circ H}(g_i,g_j)=WT_{G}(g_i,g_j) \cup \bigcup _{k \in A}H^k$, where $A=\lbrace k \in \lbrace 1,\ldots,n\rbrace ~|~ k\neq i, k\neq j, g_k\in WT_{G}(g_i,g_j) \rbrace$.
\end{lemma}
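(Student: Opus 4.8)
\textbf{Proof plan for Lemma \ref{lema3}.}

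The plan is to prove the two inclusions separately, relying heavily on Proposition \ref{sprehodi} and the structural mechanism already used in Lemma \ref{lema1} and Lemma \ref{lema2}: a walk inside $G$ can always be ``inflated'' to pick up an entire copy $H^k$ whenever the base vertex $g_k$ appears in the walk with a neighbour of $g_k$ on either side. First I would note that $g_i, g_j \in WT_G(g_i,g_j)$ trivially, and that $g_i g_j \notin E(G)$ follows from the hypothesis that they lie in distinct copies' base vertices being non-adjacent is \emph{not} assumed here — so I must be slightly careful: if $g_i g_j \in E(G)$ the set $A$ is empty and $WT_G(g_i,g_j) = \{g_i,g_j\}$, and a direct check shows $WT_{G\circ H}(g_i,g_j)=\{g_i,g_j\}$ as well (any walk from $g_i$ to $g_j$ other than the edge itself would use a vertex adjacent to $g_i$ or $g_j$, contradicting the weakly toll condition since $g_ig_j\in E$). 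So I would first dispose of the adjacent case, then assume $g_ig_j\notin E(G)$ for the main argument.

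For the inclusion $\supseteq$: if $g \in WT_G(g_i,g_j)$ then $g \in WT_{G\circ H}(g_i,g_j)$ by Proposition \ref{sprehodi} (the easy direction). For $k \in A$ and $x \in V(H^k)$, take a weakly toll walk $W: g_i, \ldots, g_k, \ldots, g_j$ in $G$ realizing $g_k$; since $k\neq i,j$, the vertex $g_k$ has a $G$-neighbour $g'$ immediately before it and a $G$-neighbour $g''$ immediately after it in $W$ (here I use that $g_k$, being an interior vertex of a walk from $g_i$ to $g_j$, is distinct from both endpoints, and $g'g'' $ need not be distinct). Let $Q^k$ be a walk through all of $V(H^k)$ starting and ending at a neighbour of $x$ — or more simply, splice in $g_k, x, g_k$, or $g_k, Q^k, g_k$ where $Q^k$ traverses $H^k$; then replacing the single occurrence of $g_k$ in $W$ by $g_k, x, g_k$ (resp. $g_k,Q^k,g_k$) yields a walk $W'$ from $g_i$ to $g_j$. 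I must verify $W'$ is still weakly toll: the only vertices adjacent to $g_i$ are its $G$-neighbours (since in $G\circ H$, $g_i$ is also adjacent to all of $H^i$, so I need $W'$ to avoid $V(H^i)$ near — but actually $V(H^i)$ vertices never appear in $W'$ at all except possibly if $i\in A$, which it is not), and similarly for $g_j$; the inserted vertices of $H^k$ are adjacent only to $g_k$ and to other $H^k$ vertices, none of which is $g_i$ or $g_j$ or their neighbours, so the weakly toll conditions at both ends are preserved. Hence $V(H^k)\subseteq WT_{G\circ H}(g_i,g_j)$.

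For the inclusion $\subseteq$: let $z \in WT_{G\circ H}(g_i,g_j)$ and let $W$ be a weakly toll walk between $g_i$ and $g_j$ containing $z$. If $z \in V(G)$, then deleting from $W$ every vertex not in $V(G)$ gives, as in Proposition \ref{sprehodi}, a weakly toll walk in $G$ containing $z$, so $z \in WT_G(g_i,g_j)$ — note $z\neq g_i,g_j$ is automatic or handled trivially. If $z \in V(H^k)$ for some $k$, then since $z$'s only neighbour outside $V(H^k)$ is $g_k$, the walk $W$ must pass through $g_k$ immediately before and after each block of $H^k$-vertices; in particular $g_k \in W$. Then $k \neq i$ and $k\neq j$: if $k=i$, then $z$ is adjacent to $g_i$, forcing $z$ to be the (unique) second vertex of $W$, but then the walk cannot return to $H^i$ and reach $g_j$ without revisiting a neighbour of $g_i$, i.e. $g_i$ itself or another $H^i$ vertex — a more careful argument is that $g_i$'s unique allowed neighbour-occurrence in $W$ is $w_1$, and any $H^i$-vertex is a neighbour of $g_i$, so the only $H^i$-vertex that can occur is $w_1$; but $w_1$ is adjacent to $g_i$ and $W$ must leave $H^i\cup\{g_i\}$ via $g_i$, contradiction unless $w_1=g_k$-type reasoning; I would argue this cleanly by the same device as Lemma \ref{lema2}'s final paragraph. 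Granting $k\neq i,j$, delete all of $V(H^k)$-vertices from $W$ (replacing each maximal $H^k$-block, which is flanked by $g_k$, by a single $g_k$) to obtain a weakly toll walk in $G\circ H$, all of whose vertices outside $V(G)$ lie in $\bigcup_{l\neq k}V(H^l)$; iterating over all such $k$ and then applying Proposition \ref{sprehodi} shows $g_k \in WT_G(g_i,g_j)$, hence $k \in A$ and $z \in H^k$ with $k\in A$.

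\textbf{Main obstacle.} The delicate point is the careful handling of the endpoint conditions and of the case $k\in\{i,j\}$: one must rule out a weakly toll walk using vertices of $H^i$ (other than in a trivial way) and confirm that inserting an $H^k$-block for interior $k$ never creates a forbidden adjacency to $g_i$ or $g_j$. This is exactly the type of local surgery already performed in Lemmas \ref{lema1} and \ref{lema2}, so I expect it to go through routinely, but it deserves explicit verification rather than being asserted.
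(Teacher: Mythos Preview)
Your proposal is correct and follows essentially the same approach as the paper: dispose of the adjacent case, invoke Proposition~\ref{sprehodi} for the $G$-vertices, splice an $H^k$-walk at an interior occurrence of $g_k$ for the $\supseteq$ direction, and for $\subseteq$ argue that vertices of $H^i$ (resp.\ $H^j$) cannot occur because they force a second neighbour of $g_i$ (resp.\ $g_j$), while $H^k$-vertices for $g_k\notin WT_G(g_i,g_j)$ are excluded via Proposition~\ref{sprehodi}. Your write-up is in fact more explicit than the paper's on the endpoint case $k\in\{i,j\}$, which the paper handles in a single clause.
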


\begin{proof}
First, assume $g_ig_j \in E(G)$. Then $W:g_i,g_j$ is (the only) weakly toll walk between vertices $g_i$ and $g_j$ in $G \circ H,$ $A=\emptyset$ and $WT_{G\circ H}(g_i,g_j)=WT_{G}(g_i,g_j)$.  Therefore, assume from now on that $g_ig_j \notin E(G)$. 

Let $g_k \in V(G)\setminus \lbrace g_i,g_j \rbrace$. By Proposition \ref{sprehodi}, $g_k \in WT_{G\circ H}(g_i,g_j)$ if and only if $g_k \in WT_{G}(g_i,g_j)$.  First, let $g_k \in WT_{G}(g_i,g_j)$,  let $W_k:g_i,\ldots,g_k,\ldots,g_j$ be a weakly toll walk in $G$ and $Q_k$ a walk in $H^k$, which contains all vertices of $H^k$. Then $W:g_i,\ldots,g_k,Q_k,g_k,\ldots,g_j$ is a weakly toll walk in $G \circ H$ that contains every vertex of $H^k$.

To conclude the proof, observe that for any $k \in \lbrace 1,\ldots,m\rbrace$ $h_k^i \notin WT_{G\circ H}(g_i,g_j)$ because $h_k^ig_l \notin E(G \circ H)$ for any $l \neq i$. Similarly, $h_k^j \notin WT_{G\circ H}(g_i,g_j)$ for any $k \in \lbrace 1,\ldots,m\rbrace$.  
If $g_k \notin WT_{G}(g_i,g_j)$ also $h^k_l \notin WT_{G\circ H}(g_i,g_j)$ for any $l \in \lbrace 1,\ldots,m\rbrace$.\qed
\end{proof}

To be able to describe weakly toll intervals for the last case, we need the following definition:

\begin{definition}
Let $u$ and $v$ be two different, non-adjacent vertices of a graph $G$. A {\it semi weakly toll walk} $\tilde{W}$ between $u$ and $v$ in $G$ is a sequence of vertices of the form $$\tilde{W}: u=w_0,w_1,\ldots,w_{k-1},w_k=v,$$ where $k\ge 0$, which enjoys the following conditions for $k>0$:
\begin{itemize}
\item $w_iw_{i+1}\in E(G)$ for all $i \in \lbrace 0,\ldots,k-1\rbrace$,
\item $uw_i\in E(G)$ implies $w_i=w_1$, where $i\in \lbrace 1,\ldots , k \rbrace$.
\end{itemize}
A semi wekly toll interval is the set $$SWT_G(u,v)=\lbrace x \in V(G) ~|~ x \text{ lies on a semi weakly toll walk between $u$ and $v$.} \rbrace.$$
\end{definition}

Note that it is important which vertex is the beginning of a semi weakly toll walk and which the end (which was irrelevant in the definition of the weakly toll walk.) The definition states that the first vertex of the semi weakly toll walk ($u$) has the same restrictions as end-vertices in the definition of the weakly toll walks, while there are no restrictions for the last vertex of the semi weakly toll walk ($v$). It can also appear more than once in the walk.

\begin{lemma}\label{lema4}
Let $G$ and $H$ be connected non complete graphs and consider a corona product graph $G \circ H$. Let $i,j \in \lbrace 1,\ldots,n\rbrace$,  $k \in \lbrace 1,\ldots,m\rbrace$ and consider vertices $g_i,h^j_k$. Let $S=SWT_G(g_i,g_j)$.
\begin{equation*}
WT_{G\circ H}(g_i,h^j_k) = \left\{
\begin{array}{cl}
\lbrace g_i,h_k^i \rbrace; & \text{if } i = j,\\
\lbrace g_i,g_j \rbrace \cup \left(V(H^j) \setminus N_{H^j}(h_k^j)\right) \cup \bigcup_{x \in S \setminus \lbrace g_i,g_j \rbrace}\left(\lbrace x \rbrace \cup V(H^x)\right); & \text{if } i \neq j.
\end{array} \right.
\end{equation*}	
\end{lemma}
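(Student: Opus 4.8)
The statement splits into two cases. The case $i=j$ is immediate: the vertex $g_i$ is a cut vertex separating $h_k^i$ from the rest of the graph, so the only weakly toll walk between $g_i$ and $h_k^i$ is the edge $g_i, h_k^i$ itself (note $g_i h_k^i \in E(G\circ H)$), hence $WT_{G\circ H}(g_i,h_k^i)=\lbrace g_i,h_k^i\rbrace$. All the work is in the case $i\neq j$; w.l.o.g.\ take $i=1$, $j=2$.

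\textbf{Membership (the set is contained in the interval).} I would exhibit explicit weakly toll walks. For $g_2$: any path $P$ from $g_1$ to $g_2$ in $G$ gives the weakly toll walk $g_1, P, h_k^2$ (the first vertex $g_1$ is adjacent in $G\circ H$ only to vertices of $G$ and of $H^1$; since the walk after $g_1$ stays outside $V(H^1)$ until it reaches $h_k^2$, and $g_1h_k^2\notin E(G\circ H)$, the end conditions hold). For a vertex $x\in S\setminus\lbrace g_1,g_2\rbrace$ and every vertex of $V(H^x)$: take a semi weakly toll walk $\tilde W: g_1,\ldots,x,\ldots,g_2$ realizing $x\in S$, splice in a walk $Q^x$ through all of $H^x$ right after the first occurrence of $x$, obtaining $g_1,\ldots,x,Q^x,x,\ldots,g_2$, and append $h_k^2$ at the end: $g_1,\ldots,x,Q^x,x,\ldots,g_2,h_k^2$. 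The starting restriction is inherited from $\tilde W$ (it is a semi weakly toll walk, so $g_1$ is adjacent only to its first vertex among all vertices of the walk — the inserted $H^x$-vertices are not adjacent to $g_1$ since $x\neq g_1$); the ending restriction holds because $g_2$'s only neighbour in the walk that is equal to $h_k^2$'s predecessor... — more precisely, $h_k^2$ is adjacent in $G\circ H$ only to $g_2$ and to vertices of $H^2$, and the only such vertex appearing in the walk is $g_2$, which is the second-to-last vertex. Finally, for $x\in V(H^2)\setminus N_{H^2}(h_k^2)$: take a path $P$ from $g_1$ to $g_2$ and use $g_1,P,x,g_2,h_k^2$ (here $x$ is adjacent to $g_2$ and to some $H^2$-neighbours, none of which besides $g_2$ lie on the walk, and $x\notin N[h_k^2]$ guarantees the ending restriction at $h_k^2$ is not violated by $x$; similarly the starting restriction at $g_1$ is fine since $x\notin V(H^1)$).

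\textbf{Non-membership (the interval is contained in the set).} The key structural fact is that \emph{every} weakly toll walk $W$ from $g_1$ to $h_k^2$ must reach $h_k^2$ through $g_2$ (as $h_k^2$'s only neighbours outside $V(H^2)$ is $g_2$, and if $W$ entered $V(H^2)$ at some $H^2$-vertex other than through $g_2$ it would have had to pass $g_2$ anyway, or else the ending restriction would force $h_k^2$ adjacent only to some $H^2$-vertex — but then $W$ could not have started at $g_1$ which is outside $V(H^2)\cup\lbrace g_2\rbrace$). So $g_2\in W$ for every such walk. Now suppose $W$ contains some vertex $z$. I split on where $z$ lives. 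If $z\in V(H^2)$ and $z\in N_{H^2}(h_k^2)$: then $h_k^2$ is adjacent to both $z$ and $g_2$, contradicting the ending restriction (both would have to be $w_{k-1}$, impossible since $z\neq g_2$) — so such $z$ is excluded, as claimed. If $z\in V(H^1)$: since $g_1$ is adjacent to every vertex of $H^1$, the starting restriction forces $z=w_1$, so $W$ visits $V(H^1)$ only at the single vertex immediately after $g_1$, and that vertex must also be the unique neighbour of $g_1$ on $W$; since $g_1$ is also adjacent to $g_2$ (if $g_1g_2\in E(G)$) one gets a contradiction unless $w_1$ is that edge-endpoint — the upshot I must establish is that no $H^1$-vertex can lie on $W$, because $w_1$ lies in $G$ (it is on a walk that eventually leaves $H^1$, and $g_1$'s only neighbour on $W$ being in $H^1$ would strand the walk). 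Then: if $z\in V(H^x)$ for some $x\notin\lbrace 1,2\rbrace$, or $z=g_x$ for some $x\in V(G)$, I ``project'' $W$ by deleting all vertices not in $V(G)\cup\lbrace h_k^2\rbrace$, and deleting $h_k^2$; this yields a walk $W'$ in $G$ from $g_1$ to $g_2$ (the $H^x$-layers are attached by cut vertices, so removing them keeps a walk) which satisfies the starting restriction at $g_1$ — hence $W'$ is a semi weakly toll walk (there is no ending restriction to check), so $p_G(z)=x\in SWT_G(g_1,g_2)=S$, placing $z$ in the claimed set. Conversely $h_k^i\notin W$ for $i\neq 2$ because it is isolated from the rest once $g_i$ is removed and $g_i\notin N(h_k^2)$.

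\textbf{Main obstacle.} The delicate point is the bookkeeping at $g_1$: I must argue cleanly that no vertex of $V(H^1)$ ever appears on a weakly toll walk from $g_1$ to $h_k^2$, using only that $g_1$ is adjacent to all of $H^1$ together with the fact that the walk must eventually leave $H^1$ (since $h_k^2\notin V(H^1)$), and therefore its sole $W$-neighbour $w_1$ must be a vertex through which the walk can escape, i.e.\ $w_1\in V(G)$. This, combined with the ``projection'' argument (mirroring Proposition~\ref{sprehodi}) and the observation that the absence of an ending restriction on the $g_2$-side is exactly what makes $SWT$ (rather than $WT$) the right notion, completes the proof.

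\qed
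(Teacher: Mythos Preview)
Your proposal is correct and follows essentially the same approach as the paper: both handle $i=j$ trivially via adjacency, construct the same explicit weakly toll walks for the membership direction (paths appended with $h_k^j$, semi weakly toll walks with $H^x$-excursions spliced in), and for non-membership both rely on the cut-vertex structure of the corona to force $g_j$ onto every walk and to exclude $N_{H^j}(h_k^j)$ and $V(H^i)$. Your projection argument---deleting the $H$-layers to obtain a semi weakly toll $g_i,g_j$-walk in $G$ and thereby showing that any $g_s$ (or $H^s$-vertex) on a weakly toll walk forces $g_s\in SWT_G(g_i,g_j)$---is actually more explicit than the paper, which simply asserts that $g_s\notin SWT_G(g_i,g_j)$ implies $g_s$ and $V(H^s)$ are excluded; one small tightening worth making is to take $P$ to be a shortest (or induced) path so that the starting restriction at $g_1$ is guaranteed, a point the paper is equally casual about.
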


\begin{proof}
If $i=j$, then $g_ih_k^i \in E(G \circ H)$ and $WT_{G\circ H}(g_i,h^j_k)=\lbrace g_i,h_k^i \rbrace$. 

Let now $i \neq j$.
By the definition of the corona product graph, it is clear that no vertex of $H^i$ belongs to  $WT_{G\circ H}(g_i,h^j_k).$ As any weakly toll walk between $g_i$ and $h^j_k$ contains a vertex $g_j$, 
it follows that if for some $l \in \lbrace 1,\ldots,m\rbrace$, $h_l^j h_k^j \in E(H^j)$, then  $h_l^j \notin WT_{G\circ H}(g_i,h^j_k).$ Let $P$ be a path from $g_i$ to $g_j$ in $G$ and $x \in V(H^k) \setminus N_{H^j}(h_k^j)$. Then $W:P,x,g_j,h_k^j$ is a weakly toll walk containing a vertex $x$. Note that if $x=h_k^j$, then $W:g_i,P,g_j,h_k^j$ is a corresponding weakly toll walk in $G \circ H$.

Let $g_s \in SWT_G(g_i,g_j)$. Then there is a semi weakly toll walk $S'$ between $g_i$ and $g_j$ in $G$ that contains a vertex $g_s$. Denote with $S_{g_i,g_s}$ a subwalk of $S'$ between $g_i$ and $g_s$, $S_{g_s,g_j}$ a subwalk of $S'$ between $g_s$ and $g_j$ and $Q^s$ a walk in $H^s$, which goes throught every vertex of $H^s$.
Then $W:S_{g_i,g_s},Q^s,S_{g_s,g_j},h_k^j$ is a weakly toll walk in $G \circ H$ containing vertices $g_s$ and every vertex of $V(H^j)$.

To conclude the proof, assume that $g_s \notin SWT_G(g_i,g_j)$. As every toll walk, that contains a vertex $h_l^s$ for some $l \in \lbrace 1,\ldots,m\rbrace$, must also contain a vertex $g_s$, no vertex of $V(H^s)$ belongs to $WT_{G\circ H}(g_i,h^j_k)$.\qed
\end{proof}

The description of weakly toll intervals in the corona product graphs gives the following results on the weakly toll number of the corona product graphs. By Lemma \ref{lema1} it immidiately follows that $wtn(G \circ H)=2$ if $wtn(H)=2$.

\begin{corollary}\label{navadna2}
Let $G$ and $H$ be connected graphs with $wtn(H)=2$. Then $wtn(G \circ H)=2.$
\end{corollary}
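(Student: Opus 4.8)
The plan is to deduce the result directly from Lemma~\ref{lema1}, exactly as suggested by the sentence preceding the statement. The idea is that if $\{h_1,h_2\}$ is a weakly toll set of $H$ of size $2$, then the obstruction set $X$ appearing in Lemma~\ref{lema1} is empty for the pair $h_1^i,h_2^i$ (of any fixed copy $H^i$), and hence $WT_{G\circ H}(h_1^i,h_2^i)=V(G\circ H)$.

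First I would note that since $wtn(H)=2$ and $H$ is non complete, there are two non-adjacent vertices $h_1,h_2\in V(H)$ with $WT_H(h_1,h_2)=V(H)$; in particular $h_1^i,h_2^i$ are non-adjacent in $H^i$ and $WT_{H^i}(h_1^i,h_2^i)=V(H^i)$. Then I would apply Lemma~\ref{lema1} to the pair $h_1^i,h_2^i$: it gives $WT_{G\circ H}(h_1^i,h_2^i)=V(G\circ H)\setminus X$, where $X$ consists of those $x\in V(H^i)$ with $x\notin WT_{H^i}(h_1^i,h_2^i)$ that are adjacent to exactly one of $h_1^i,h_2^i$. But the condition $x\notin WT_{H^i}(h_1^i,h_2^i)$ is vacuous since $WT_{H^i}(h_1^i,h_2^i)=V(H^i)$, so $X=\emptyset$. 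Hence $WT_{G\circ H}(h_1^i,h_2^i)=V(G\circ H)$, so $\{h_1^i,h_2^i\}$ is a weakly toll set of $G\circ H$ and $wtn(G\circ H)\le 2$.

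Finally I would invoke the general lower bound $wtn(G\circ H)\ge 2$, which holds for any non-trivial connected graph (stated in the Preliminaries: $2\le wtn(G)\le n$), and note that $G\circ H$ is non-trivial and connected. Combining the two inequalities yields $wtn(G\circ H)=2$.

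There is essentially no obstacle here: the content is entirely carried by Lemma~\ref{lema1}, and the only thing to be careful about is the (trivial) observation that assuming $wtn(H)=2$ forces the set $X$ to be empty, together with checking that the hypotheses of Lemma~\ref{lema1} (connectedness and non-completeness of $G$ and $H$, and non-adjacency of the chosen pair) are met. The statement even relaxes ``non complete'' on $G$ to merely ``connected'', but since $wtn(H)=2$ already requires $H$ non complete, and the witness pair lives inside a copy of $H$, the argument of Lemma~\ref{lema1} goes through regardless of whether $G$ is complete.
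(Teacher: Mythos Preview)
Your proposal is correct and follows exactly the paper's approach: pick a weakly toll pair $\{h_1,h_2\}$ in $H$, apply Lemma~\ref{lema1} to the corresponding pair in one copy $H^i$, and observe that the obstruction set $X$ is empty because $WT_{H^i}(h_1^i,h_2^i)=V(H^i)$. The paper's proof is just a terser version of what you wrote (it omits the explicit lower bound and the discussion of hypotheses), so there is nothing to add.
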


\begin{proof}
Let $W=\lbrace h_1,h_2 \rbrace$ be a weakly toll set of $H$. By Lemma \ref{lema1}, a set $\lbrace h_1^1,h_2^1 \rbrace$ is a weakly toll set of $G \circ H$ as the case $x \notin WT_{H^1}(h_1^1,h_2^1)$ cannot happen.\qed
\end{proof}

Even more, one additional vertex gives an upper bound for arbitrary connected graph $H$.

\begin{proposition}\label{zgornja3}
Let $G$ and $H$ be connected graphs and $H$ non complete graph. Then $wtn(G \circ H)\leq 3.$
\end{proposition}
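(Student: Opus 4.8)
The plan is to exhibit, for an arbitrary connected non complete graph $H$ and connected graph $G$, a weakly toll set of $G\circ H$ of size $3$. Since $H$ is non complete, we may pick two non adjacent vertices $h_1,h_2\in V(H)$; by Lemma~\ref{lema1} the interval $WT_{G\circ H}(h_1^1,h_2^1)$ already captures all of $V(G)\cup\bigcup_{k=2}^{n}V(H^k)$ and all of $V(H^1)$ except possibly the ``bad'' set $X=\{x\in V(H^1)\mid x\notin WT_{H^1}(h_1^1,h_2^1)\text{ and }x\text{ is adjacent to exactly one of }h_1^1,h_2^1\}$. So the first step is to reduce the problem to covering this set $X$ with a third vertex.

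The key step is to choose the third vertex of the set cleverly so that every $x\in X$ lands in a weakly toll interval. I would take a vertex $g_r\in V(G)$ with $r\neq 1$ — any vertex, using that $G$ is connected (so a path from $g_1$ to $g_r$ exists) — and consider the set $S=\{h_1^1,h_2^1,h_r^1\}$ where $h_r^1$ is some vertex of $H^r$. Then I invoke Lemma~\ref{lema2} for the pair $(h_1^1,h_r^1)$: its interval is $V(G\circ H)\setminus(N_{H^1}(h_1^1)\cup N_{H^r}(h_r^1))$, and likewise for $(h_2^1,h_r^1)$ the interval misses only $N_{H^1}(h_2^1)\cup N_{H^r}(h_r^1)$. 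A vertex $x\in X$ is adjacent to exactly one of $h_1^1,h_2^1$ — say it is adjacent to $h_1^1$ but not to $h_2^1$; then $x\notin N_{H^1}(h_2^1)$, so $x\in WT_{G\circ H}(h_2^1,h_r^1)$ provided $x\notin N_{H^r}(h_r^1)$, which holds automatically since $x\in V(H^1)$ and $H^1,H^r$ are different, disjoint copies. Symmetrically, if $x$ is adjacent only to $h_2^1$ then $x\in WT_{G\circ H}(h_1^1,h_r^1)$. Hence $X\subseteq WT_{G\circ H}(h_1^1,h_r^1)\cup WT_{G\circ H}(h_2^1,h_r^1)$, and combined with $WT_{G\circ H}(h_1^1,h_2^1)\supseteq V(G\circ H)\setminus X$ this gives $WT_{G\circ H}(S)=V(G\circ H)$, so $wtn(G\circ H)\le 3$.

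One should double-check that $r$ with $r\neq 1$ exists, i.e. that $n=|V(G)|\ge 2$; this is part of the standing assumption that all graphs are non trivial. I would also note explicitly that $h_1^1$ and $h_2^1$ are indeed non adjacent in $G\circ H$ (adjacency within $H^1$ is exactly adjacency in $H$), so Lemma~\ref{lema1} applies. The main obstacle — and it is a mild one — is simply being careful with the two symmetric subcases of ``adjacent to exactly one'' and making sure the covering argument handles every $x\in X$; there is no real combinatorial difficulty, since Lemmas~\ref{lema1} and~\ref{lema2} do all the heavy lifting. An alternative route, should one prefer to stay inside a single copy, is to mimic the lexicographic argument of Proposition~\ref{lex3}: take $g_1,g_2\in V(G)$ at distance $2$ with common neighbour $g$, and use $\{h_1^1,h_2^1,h_1^2\}$ together with walks of the form $h_2^1,g_1,x,g_1,\dots$ routed through $g$ and $g_2$; but the Lemma~\ref{lema2}-based argument above is shorter and I would present that.
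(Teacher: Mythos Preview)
Your proof is correct and follows essentially the same two-step strategy as the paper: use Lemma~\ref{lema1} to cover $V(G\circ H)\setminus X$, then add a single third vertex to pick up $X$. The only difference is the choice of that third vertex. The paper takes $g_2\in V(G)$ with $g_1g_2\in E(G)$ and writes down the explicit walks $g_2,g_1,x,g_1,h_2^1$ and $g_2,g_1,x,g_1,h_1^1$; you instead take a vertex from a different copy $H^r$ and invoke Lemma~\ref{lema2} directly. Your route is marginally cleaner because it reuses an already-proved lemma rather than building walks by hand; the paper's route has the small advantage that the later hull-number argument (Theorem~\ref{hullCor}) refers back to the vertex $g_2$, though your third vertex would serve equally well there since it also lies in $WT_{G\circ H}(h_1^1,h_2^1)$ by Lemma~\ref{lema1}.

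One notational slip: in the paper's convention $h_k^i$ denotes the $k$-th vertex of the $i$-th copy $H^i$, so your ``$h_r^1$'' literally names a vertex of $H^1$, not of $H^r$. You mean something like $h_1^r$ (or any $h_l^r\in V(H^r)$); fix the superscript and subscript.
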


\begin{proof}
Let $h_1^1,h_2^1$ be two non adjacent vertices of $H^1.$ Note that if $\lbrace h_1^1,h_2^1 \rbrace$ is a weakly toll set of $H^1$, by Corollary \ref{navadna2}, $wtn(G \circ H)=2$. So, assume that $\lbrace h_1^1,h_2^1 \rbrace$ is not a weakly toll set of $H^1$. By Lemma \ref{lema1},  $WT_{G\circ H}(h_1^1,h_2^1)=V(G \circ H) \setminus X$,  where $$X=\lbrace x \in V(H^i) ~|~ x\notin WT_{H^1}(h_1^i,h_2^1) \text{ and } x \text{ is adjacent to exactly one of } h_1^i \text{ and } h_2^i\rbrace.$$
Now, consider a set $\lbrace h_1^1,h_2^1,g_2 \rbrace$, where $g_1g_2 \in E(G)$. Let $x \in V(H^1)$ such that $xh_1^1 \in E(H^1)$ and $xh_2^1 \notin E(H^1)$. Then $W: g_2,g_1,x,g_1,h_2^1$ is a weakly toll walk in $G \circ H$ that contains $x$. If $x \in V(H^1)$ is such a vertex that $xh_1^1 \notin E(H^1)$ and $xh_2^1 \in E(H^1)$, then $W': g_2,g_1,x,g_1,h_1^1$ is a weakly toll walk in $G \circ H$ that contains $x$. Therefore $\lbrace h_1^1,h_2^1,g_2 \rbrace$ is a weakly toll set and  $wtn(G \circ H)\leq 3.$
\qed
\end{proof}

Even more, investigating Lemma \ref{lema1}, Lemma \ref{lema2}, Lemma \ref{lema3} and Lemma \ref{lema4} we observe that for given graphs $G$ and $H$, maximum weakly toll intervals arrise when the weakly toll interval is taken between two non adjacent vertices of one copy of $H$ (as described in Lemma \ref{lema1}).
Such a weakly toll interval is maximum iff a set
$$X=\lbrace x \in V(H^i) ~|~ x\notin WT_{H^i}(h_1^i,h_2^i) \text{ and } x \text{ is adjacent to exactly one of } h_1^i \text{ and } h_2^i\rbrace.$$ is minimum, i.e. an empty set.  By the notation, used in Lemma \ref{clanek2}, observe that $X=X_{h^i_1} \cup X_{h^i_2}$. Moreover, applying Corolarry \ref{glavnaPOSL} leads to $X_{h^i_1} \cup X_{h^i_2}=\emptyset$ if and only if $wtn(H)=2.$

Therefore we have the following:

\begin{theorem}\label{glavniC}
Let $G$ and $H$ be connected non complete graphs. Then
\begin{equation*}
wtn(G \circ H) = \left\{
\begin{array}{cl}
2; &wtn(H)=2,\\
3; & \text{otherwise.}
\end{array} \right.
\end{equation*}	
\end{theorem}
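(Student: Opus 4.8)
The plan is to combine the upper bound already in hand with a matching lower bound derived from the interval descriptions. By Proposition \ref{zgornja3} we already know $wtn(G\circ H)\le 3$ for all connected non complete $H$, and by Corollary \ref{navadna2} we know that $wtn(H)=2$ forces $wtn(G\circ H)=2$. So the only thing left to prove is: \emph{if $wtn(H)>2$, then $wtn(G\circ H)>2$}, which together with the $\le 3$ bound gives exactly $3$ in that case. I would phrase the whole argument through Corollary \ref{glavnaPOSL}: it suffices to show that every maximum weakly toll interval $WT_{G\circ H}(u,v)$ between non adjacent $u,v$ has $X_u\cup X_v\ne\emptyset$.

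First I would establish which pairs of vertices can realize a \emph{maximum} weakly toll interval in $G\circ H$. Using Lemma \ref{lema2}, \ref{lema3}, and \ref{lema4} I would argue that intervals of those three types are never maximum: the type-\ref{lema3} intervals ($g_i,g_j$) and type-\ref{lema4} intervals ($g_i,h_k^j$) always miss all of $N_{H^i}(h_k^i)$-type neighbourhoods and whole copies $H^k$ with $g_k\notin WT_G(g_i,g_j)$ (and $H$ is non complete, so these neighbourhoods are nonempty and proper), while type-\ref{lema2} intervals always miss $N_{H^i}(h_k^i)\cup N_{H^j}(h_l^j)$, again nonempty since $H$ is non complete — and in each of these cases the interval of type \ref{lema1} obtained by replacing one endpoint with a suitable vertex of a copy of $H$ is strictly larger, because by Lemma \ref{lema1} such an interval omits only the set $X$ of at most $|V(H)|-2$ vertices inside a single copy $H^i$. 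Hence a maximum weakly toll interval must be of the form $WT_{G\circ H}(h_1^i,h_2^i)$ for non adjacent $h_1^i,h_2^i$ in a single copy $H^i$, and by Lemma \ref{lema1} the omitted set is exactly
\[
X=\{x\in V(H^i)\mid x\notin WT_{H^i}(h_1^i,h_2^i)\text{ and }x\text{ adjacent to exactly one of }h_1^i,h_2^i\}.
\]

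Next I would identify $X$ with $X_{h_1^i}\cup X_{h_2^i}$ in the sense of Lemma \ref{clanek2} applied to the graph $G\circ H$: a vertex $x\in V(H^i)$ adjacent to exactly $h_1^i$ (and not in the interval) lies in $N[h_1^i]\setminus WT_{G\circ H}(h_1^i,h_2^i)=X_{h_1^i}$, and symmetrically; and conversely $X_{h_1^i}\cup X_{h_2^i}\subseteq X$ because any omitted vertex adjacent to an endpoint must live in $H^i$ (vertices of $G$ and other copies are always in the interval by Lemma \ref{lema1}) and cannot be adjacent to both endpoints (else $W:h_1^i,x,h_2^i$ would be a weakly toll walk). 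Then I would observe that the maximum interval condition $|WT_{G\circ H}(h_1^i,h_2^i)|$ maximal is equivalent, via the first step, to $X$ being as small as possible over all choices of non adjacent pairs in copies of $H$, i.e. to choosing in $H$ a pair $h_1,h_2$ maximizing $WT_H(h_1,h_2)$; and by Corollary \ref{glavnaPOSL} applied \emph{to $H$}, that maximum $X$ is empty precisely when $wtn(H)=2$. So if $wtn(H)>2$ then $X=X_{h_1^i}\cup X_{h_2^i}\ne\emptyset$ for every maximum interval of $G\circ H$, whence $wtn(G\circ H)>2$ by Corollary \ref{glavnaPOSL}, and combined with $wtn(G\circ H)\le 3$ this yields $wtn(G\circ H)=3$.

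The main obstacle I anticipate is the bookkeeping in the first step — rigorously checking that every type-\ref{lema2}/\ref{lema3}/\ref{lema4} interval is strictly dominated by some type-\ref{lema1} interval, and in particular handling the degenerate possibility that $WT_H(h_1,h_2)$ is so large that a type-\ref{lema3} interval $WT_G(g_i,g_j)\cup\bigcup_{k\in A}H^k$ could compete; one must use that such an interval always omits the two whole copies $H^i,H^j$ (each of size $|V(H)|\ge 2$) plus all of $N[g_i]$-type restrictions, so it cannot beat a type-\ref{lema1} interval that omits at most $|V(H)|-2$ vertices of a single copy. Making this comparison airtight, uniformly in $G$ and $H$, is the delicate part; everything after that is a clean reduction to Corollary \ref{glavnaPOSL} for the factor $H$.
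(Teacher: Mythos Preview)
Your proposal follows essentially the same route as the paper: the paper's argument (given in the paragraph immediately preceding Theorem~\ref{glavniC}) likewise combines Proposition~\ref{zgornja3} and Corollary~\ref{navadna2} with the observation---drawn from comparing Lemmas~\ref{lema1}--\ref{lema4}---that maximum weakly toll intervals in $G\circ H$ are of the Lemma~\ref{lema1} type, then identifies the omitted set $X$ with $X_{h_1^i}\cup X_{h_2^i}$ and invokes Corollary~\ref{glavnaPOSL} for $H$ to conclude that $X=\emptyset$ iff $wtn(H)=2$. One small caution: your intermediate claim that minimizing $|X|$ over Lemma-\ref{lema1} pairs is literally equivalent to maximizing $|WT_H(h_1,h_2)|$ is stronger than what is needed (and not obviously true), but neither you nor the paper actually requires it---all that is used is that types~\ref{lema2}/\ref{lema3}/\ref{lema4} never cover $V(G\circ H)$, and that $X=\emptyset$ forces $WT_H(h_1,h_2)=V(H)$, which follows directly from Lemma~\ref{sosedi}.
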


Even more:
\begin{theorem}\label{hullCor}
Let $G$ and $H$ be connected non complete graphs. Then $wth(G\circ H) = 2.$
\end{theorem}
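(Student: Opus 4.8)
\textbf{Proof proposal for Theorem \ref{hullCor}.}

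The plan is to exhibit a two-element hull set of $G\circ H$, exactly mirroring how Theorem \ref{hullLex} was obtained from Proposition \ref{lex3}. Since $wth(G\circ H)\le wtn(G\circ H)\le 3$ by Proposition \ref{zgornja3} and the general inequality $wth\le wtn$, it suffices to produce a set $S$ of size $2$ with $[S]]_{WT}=V(G\circ H)$. If $wtn(H)=2$ we are done immediately by Corollary \ref{navadna2} (a weakly toll set is a weakly toll hull set), so assume $wtn(H)>2$, in which case no pair of vertices of a single copy $H^1$ is a weakly toll set of that copy. Pick two non-adjacent vertices $h_1^1,h_2^1\in V(H^1)$ and set $S=\{h_1^1,h_2^1\}$; the goal is to show $WT^2(S)=V(G\circ H)$, i.e. two rounds of taking weakly toll intervals already fill the whole vertex set.

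First I would compute $WT^1(S)=WT_{G\circ H}(h_1^1,h_2^1)$ using Lemma \ref{lema1}: this set is $V(G\circ H)\setminus X$, where $X=\{x\in V(H^1)\mid x\notin WT_{H^1}(h_1^1,h_2^1)\text{ and }x\text{ is adjacent to exactly one of }h_1^1,h_2^1\}$. In particular $WT^1(S)$ contains all of $V(G)$ (hence $g_1$ and some neighbour $g_2$ with $g_1g_2\in E(G)$, which exists since $G$ is connected and non-trivial), all copies $H^k$ for $k\ge 2$, and the vertices of $H^1$ adjacent to both or to neither of $h_1^1,h_2^1$. The only vertices possibly missing are those $x\in V(H^1)$ adjacent to exactly one of $h_1^1,h_2^1$ and not in $WT_{H^1}(h_1^1,h_2^1)$. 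For the second round, I reuse the argument from the proof of Proposition \ref{zgornja3}: since $g_1,g_2,h_1^1,h_2^1\in WT^1(S)$, for any such missing $x$ with $xh_1^1\in E(H^1)$, $xh_2^1\notin E(H^1)$ the walk $W:g_2,g_1,x,g_1,h_2^1$ is a weakly toll walk between $g_2$ and $h_2^1$ (check: $g_2$ is adjacent in $W$ only to $g_1$, using $g_2h_2^1\notin E(G\circ H)$ and $g_2x\notin E(G\circ H)$ since $x\in H^1$ while $g_2=g_j$ with $j\ne 1$; and $h_2^1$ is adjacent in $W$ only to the preceding $g_1$, using $h_2^1x\notin E(H^1)$), so $x\in WT_{G\circ H}(g_2,h_2^1)\subseteq WT^2(S)$. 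Symmetrically, if $xh_1^1\notin E(H^1)$ and $xh_2^1\in E(H^1)$ then $W':g_2,g_1,x,g_1,h_1^1$ works. Hence every vertex of $X$ lies in $WT^2(S)$, so $WT^2(S)=V(G\circ H)$, giving $wth(G\circ H)\le 2$; combined with the trivial lower bound $wth\ge 2$ for non-trivial connected graphs, equality follows.

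I do not expect a serious obstacle here — the main thing to be careful about is verifying that the two-step walks really are weakly toll walks, i.e. checking the adjacency restrictions at both endpoints; the key point that makes this painless is the structure of the corona product, where a vertex of $H^1$ has no neighbours outside $N_{H^1}[\cdot]\cup\{g_1\}$, so $g_2$ (lying in the $G$-part, distinct from $g_1$) is genuinely adjacent in $W$ only to $g_1$, and the terminal vertex $h_i^1$ is adjacent in $W$ only to $g_1$ precisely because $x$ was chosen non-adjacent to it. One should also note the degenerate possibility that $X=\emptyset$, in which case $WT^1(S)=V(G\circ H)$ already and there is nothing more to prove. Finally, the case $wtn(H)=2$ should be dispatched at the very start via Corollary \ref{navadna2}, so that the displayed argument only needs to treat $wtn(H)>2$.
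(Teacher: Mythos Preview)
Your proof is correct and follows essentially the same approach as the paper: take $S=\{h_1^1,h_2^1\}$, observe via Lemma~\ref{lema1} that $g_2\in WT(S)$, and then invoke the argument of Proposition~\ref{zgornja3} to conclude that a second application of $WT$ captures the remaining vertices $X\subseteq V(H^1)$. The paper's own proof is a two-line sketch that points to exactly this reasoning; you have simply written out the details (and your initial case split on $wtn(H)$ is harmless but unnecessary, as you yourself note via the ``degenerate possibility'' $X=\emptyset$).
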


\begin{proof}
Note that $wth(G\circ H)\leq wtn(G\circ H) \leq 3.$ Recall the proof of the Proposition \ref{zgornja3}. Let $S=\lbrace h_1^1,h_2^1 \rbrace$. Then  $\left[ S \right]]_{WT}=V(G \circ H)$ as the vertex $g_2 \in WT(h_1^1,h_2^1)$.\qed
\end{proof}

In addition, recall the definition of the generalized corona product graph. Given simple graphs $G,H_1, \ldots , H_n$, where $n = |V (G)|$, the generalized corona, denoted $G\circ \wedge_{i=1}^n H_i$, is the graph, obtained by taking one copy of graphs $G,H_1, \ldots , H_n$, and joining the $i-th$ vertex of $G$ to every vertex of $H_i$. Summing up the results, obtained for the corona product graph, it follows immidiately that $wtn(G\circ \wedge_{i=1}^n H_i) \leq 3$ if at least one of $H_i$ is not a complete graph. Even more, if there is one of $H_i$, for which $wtn(H_i)=2$, then $wtn(G\circ \wedge_{i=1}^n H_i) =2$. Obviously, by Theroem \ref{hullCor}, $wth(G\circ \wedge_{i=1}^n H_i) =2$.

\section{Acknowledgements}

This research was supported by Slovenian Research Agency under the grants P1-0383, J1-2452 and J1-1693.

\end{document}